\documentclass[12pt,a4paper,reqno]{amsart}
\usepackage{amsfonts,amsthm,amsmath,amscd}
\usepackage{booktabs}
\usepackage[utf8]{inputenc}
\usepackage{t1enc}
\usepackage[mathscr]{eucal}
\usepackage{indentfirst}
\usepackage{tikz}
\usetikzlibrary{shapes.geometric, positioning, calc}
\usetikzlibrary{automata,arrows,positioning,calc}
\usetikzlibrary{positioning}
\usepackage{caption}
\usepackage{subcaption}
\usepackage[font=small,labelfont=bf]{caption}
\usepackage{graphicx,graphics,pict2e}
\usepackage{tkz-berge}
\usetikzlibrary{arrows.meta} 
\usetikzlibrary{decorations.pathmorphing,decorations.markings} 
\usepackage{float}
\usepackage{epic}
\usepackage{lipsum}
\usepackage{stmaryrd}
\usepackage{authblk}
\usepackage[symbol]{footmisc}
\usepackage[normalem]{ulem}
\numberwithin{equation}{section}
\usepackage[margin=2.9cm]{geometry}
\usepackage{epstopdf}
\RequirePackage{doi}
\usepackage{hyperref}
\allowdisplaybreaks
\usepackage{cite}
\usepackage{multirow}
\usepackage{multicol}
\usepackage{verbatim,amssymb,amsfonts}
\usepackage{mathrsfs}
\usepackage{mathtools}
\DeclarePairedDelimiter\floor{\lfloor}{\rfloor}
\usepackage{tikz-cd}
\usepackage{indentfirst}
\usepackage{slashed}
\usepackage{thmtools}
\usepackage{setspace}
\usepackage{enumerate}
\usepackage{accents}

\usetikzlibrary{arrows.meta, bending}

\theoremstyle{plain}
\newtheorem{theorem}{Theorem}[section]

\newtheorem{proposition}[theorem]{Proposition}

 \theoremstyle{definition}
\newtheorem{definition}[theorem]{Definition}
\newtheorem{remark}[theorem]{Remark}
\newtheorem{example}[theorem]{Example}

\DeclarePairedDelimiterX{\inp}[2]{\langle}{\rangle}{#1, #2}
\newcommand{\<}{\langle}
\renewcommand{\>}{\rangle}

\newcommand{\cL}{{\mathcal L}}

\newcommand{\cV}{{\mathcal V}}

\newcommand{\diam}{{\rm{diam}}}

\newcommand{\ba}{\begin{eqnarray}}
\newcommand{\na}{\end{eqnarray}}
\newcommand{\ban}{\begin{eqnarray*}}
\newcommand{\nan}{\end{eqnarray*}}

\newcommand{\R}{{\mathbb R}}
\newcommand{\Z}{{\mathbb Z}}

\renewcommand{\thefootnote}{\fnsymbol{footnote}}

\makeatletter
\g@addto@macro{\endabstract}{\@setabstract}
\newcommand{\authorfootnotes}{\renewcommand\thefootnote{\@fnsymbol\c@footnote}}%
\makeatother

\makeatletter
\@namedef{subjclassname@2020}{%
  \textup{2020} Mathematics Subject Classification}
\makeatother

\title[]{Steinerberger Curvature on Digraphs: \\Discrete Bonnet--Myers and Lichnerowicz Theorems}

\subjclass[2020]{05C12, 05C20, 05C25, 05C50}
\keywords{Steinerberger curvature, directed graphs, Cayley graphs, Bonnet-Myers, Lichnerowicz, Minimax Theorem.}

\begin{document}

\begin{center}
    \vspace{-1cm}
	\maketitle
	
	\normalsize
    \authorfootnotes
    Kevin Fung, Johnny Lim\footnote[1]{Corresponding author.}
	\par \bigskip

        \small{School of Mathematical Sciences, Universiti Sains Malaysia, Penang, Malaysia}\par \bigskip
\end{center}

\address{School of Mathematical Sciences, Universiti Sains Malaysia, Penang, Malaysia
}
\email{fungkevin@student.usm.my}
\email{johnny.lim@usm.my}

\begin{abstract}
Steinerberger curvature encodes the global distance geometry of a graph through an equilibrium measure. In this paper, we derive explicit curvature formulas for undirected Cayley graphs of dihedral groups $D_n$ and generalized quaternion groups $Q_{4m}$. We then extend Steinerberger curvature to strongly connected simple digraphs by introducing in-curvature and out-curvature, reflecting the asymmetry of directed distances. For these directed curvatures, we establish structural properties, including negativity criteria and a permutation relation between in- and out-curvature. Our main results are directed analogues of the Bonnet--Myers, Cheng and Lichnerowicz theorems, together with reverse Bonnet--Myers inequalities for directed diameter and out-radius, and an upper bound for in-radius in terms of total curvature.
\end{abstract}

\section{Introduction}
\label{sec1}

Curvature is one of the central ideas in geometry, and several graph-theoretic analogues have been developed to transfer geometric intuition to discrete spaces, see, e.g.,  \cite{linluyau2011Ricci,lott2009Riccimetric, ollivier2009Riccimetric,sturm2006geometryI,sturm2006geometryII}. Among these notions, Steinerberger curvature is distinguished by its global and metric nature.
Let $G=(V, E)$ be an undirected simple graph with distance matrix $D=(d(v,w))_{v,w\in V}$. The Steinerberger curvature of $G$ is defined as a \textit{measure} $\mu:V \rightarrow \R$ satisfying 
\begin{equation}
\label{eq_steinerberger}
    \sum_{w\in \cV} d(v,w)\mu(w)=|V|,
\end{equation}
for every $v\in V,$ or equivalently, $\mu$ solves the system $D\mu =n\cdot\textbf{1}$, where $n=|V|$ and $\textbf{1}$ is the $n\times 1 $ column vector with all entries being one. If the system does not admit a solution, the curvature is defined as $\mu= D^{\dagger}(n \cdot \textbf{1})$ where $D^{\dagger}$ is the Moore-Penrose inverse of $D$. These are called distance exceptional graphs. We refer the readers to \cite{steinerberger2023curvature} for further details.

Although Steinerberger curvature is relatively recent, it already reveals connections with other notions of discrete curvature. For example, the Steinerberger curvature of complete graphs $K_n$ and hypercube graph $Q_n$ agrees with Lin--Lu--Yau Ricci curvature, while for Cocktail Party graphs, Johnson graphs and Demi-cubes graphs, it coincides with Ollivier-Ricci curvature. 
Several structural results are also known. 
It has been shown in \cite{chen2025steinerberger} that nonnegative Steinerberger curvature is largely preserved under graph operations such as bridging (adding an edge between two graphs), merging (bridging two graphs and then contracting the edge), and cutting (removing an edge that disconnects the graph into two connected components), except possibly at a small number of vertices. These operations also provide methods for constructing distance exceptional graphs \cite{robertson2025distexceptional}, and explicit curvature formulas are known for graphs obtained by bridging two graphs \cite{cushing2025note_steinerbeger}. 

In \cite{mizukaiRicci_Cayley2024}, Iwao and Akifumi computed explicitly the Lin--Lu--Yau curvature of undirected Cayley graphs of dihedral groups $D_n$, generalized quaternion groups $Q_{4m}$ and cyclic groups $\Z_n$. We have recently generalized these to the case of digraphs, cf. \cite{kevin2026lly_directed}. These developments naturally lead to two immediate questions: 
\begin{enumerate}
    \item Does the Steinerberger curvature of these undirected Cayley graphs agree with their Lin--Lu--Yau Ricci curvature?
    \item What is the appropriate directed analogue of Steinerberger curvature?
\end{enumerate}

Motivated by these questions, this paper studies  Steinerberger curvature in two directions. First, we provide explicit formulas for the Steinerberger curvature of undirected Cayley graphs of dihedral groups $D_n$ and generalized quaternion group $Q_{4m},$ where the generating sets consist of all generators of the respective groups together with their inverses. A comparison with Lin--Lu--Yau curvature is made, highlighting the fundamental difference between two curvatures. Second, we introduce a \textit{directed analogue} of  Steinerberger curvature. Since directed distances are generally non-symmetric, we define \textit{in-curvature} and \textit{out-curvature} for digraphs. We do not aim to compute the Steinerberger curvature for directed Cayley graphs as we did for the undirected case. Rather, we showed that this framework allows classical comparison-type results of Riemannian geometry such as Bonnet--Myers Theorem, Cheng's Theorem, Lichnerowicz Theorem and reverse Bonnet--Myers Theorem to be formulated in the directed setting. 

The paper is organized as follows. Sect. \ref{sect:preliminaries} recalls the necessary preliminaries. Sect. \ref{sect:undirectedcayley} computes the Steinerberger curvature of undirected Cayley graphs of the dihedral groups $D_n$ and generalized quaternion groups $Q_{4m}$, followed by a comparison with Lin--Lu--Yau curvature. Sect. \ref{sect:curv_directed} studies the directed case, beginning with sufficient conditions for  negative in- and out-curvatures in Proposition \ref{prop_suff_negcurv}.
We then prove the Bonnet--Myers and Cheng's Theorem in Theorem \ref{theo:bonnet-myers}, 
and the Lichnerowicz Theorem in Theorem \ref{theo:lichnerowicz}, and a variational theorem in Theorem \ref{theorem_variation}. 
This leads to reverse Bonnet--Myers Inequality for the directed diameter and out-radius in Theorem \ref{theo_reverse_bonnet} and Theorem \ref{theo:reversebonnet_outradius}.
Finally, Theorem \ref{theo:inradius_ub} establishes an upper bound for the in-radius in terms of total curvature.

\section{Preliminaries}
\label{sect:preliminaries}

In this section, we list the necessary preliminaries in this article.

\begin{definition}\cite{jorgen2009digraphs_book}
Let $G=(V,E)$ be a digraph, where $V$ and $E$ denote the set of vertices and arcs, respectively. 
    \begin{enumerate}
        \item For any two vertices $v,w \in V$, if there is a directed path from $v$ to $w$, then $w$ is said to be \textit{reachable} from $v$. A digraph $G$ is  \textit{strongly connected} if any two vertices in $V$ are reachable. The distance $d(v,w)$ from $v$ to $w$ is the length of a shortest directed path from $v$ to $w$.

        \item For any vertex $v\in V$, its \textit{out-degree} $d_v^{\text{out}}$ is the number of arcs leaving $v$. A digraph $G$ is \textit{locally finite} if every vertex has a finite out-degree.


        \item A digraph $G$ is \textit{$r$-regular} if all vertices have the same out-degree  $r$.

        \item A digraph $G$ is \textit{simple} if it has no multiple arcs and loops.
    \end{enumerate}
\end{definition}

\begin{definition}\cite{jorgen2009digraphs_book}
\label{def2.3}
    Let $G=(V,E)$ be a digraph and let $X,Y \subseteq V$. The \textit{distance} from $X$ to $Y$ is defined as
    \begin{equation*}
        \text{dist}(X,Y) = \max_{x\in X ,y\in Y} \{ d(x,y)\}.
    \end{equation*}
    \begin{enumerate}[(i)]
        \item The directed \textit{diameter} of $G$ is defined as
            \begin{equation*}
            \overrightarrow{\diam} (G)= \text{dist} (V,V).
            \end{equation*} 
        
        \item The \textit{in-radius} of $G$
        is defined as
            \begin{equation*}
            \mathrm{rad}^-(G) = \min_{x \in V} \{ \text{dist}(V,x)\}.
            \end{equation*}
        
        \item The \textit{out-radius} of $G$
        is defined as
            \begin{equation*}
            \mathrm{rad}^+(G) = \min_{x \in V} \{ \text{dist}(x,V) \}.
            \end{equation*}
    \end{enumerate}
\end{definition}

\begin{definition}
    \cite{grossman1964groups}
    \label{directed Cayley graphs}
    Let $G$ be a group $G$ and $S$ be a subset of $G-\{e\}$. A \textit{directed Cayley graph} $\Gamma (G, S)$ is defined as a simple directed graph with vertex set $G$ and arcs of the form $(g,gs)$ for every $g \in G$ and $s \in S$.
\end{definition}

\begin{definition}
    \cite{grossman1964groups}
    \label{undirected Cayley graphs}
    Let $G$ be a group $G$ and $S$ be a subset of $G-\{e\}$. An \textit{undirected Cayley graph} $\Gamma (G, S)$ is defined as a directed Cayley graph such that
    $S$ is symmetric (inverse-closed), i.e. $S=S^{-1}$, and symmetric arcs between two vertices are identified as one edge.
\end{definition}

\begin{remark}
\begin{enumerate}
    \item In Def. \ref{directed Cayley graphs} (resp. Def. \ref{undirected Cayley graphs}), if $S$ generates $G$, i.e. $G=\langle S \rangle$, then $G$ is strongly connected (resp. connected).
    
    \item The Cayley graph $\Gamma(G,S)$ is always regular of degree $|S|$.

    \item If $S$ contains the unit $e$, then every vertex $g \in G$ has one self-loop because $(g,ge)=(g,g)$. 
\end{enumerate}
\end{remark}

\begin{definition}{(\cite{bjorner2006combinatorics}, \cite{Johnson1980groupspresentation})}
    \begin{enumerate}
        \item For $n\geq 3$, the dihedral groups, $D_n$ are defined as 
        \begin{equation*}
            D_n=\left< a,b \mid a^n=b^2=e \text{ and } ba=a^{n-1}b \right>.
        \end{equation*}

        \item For $m\geq 2$, the generalized quaternion groups, $Q_{4m}$ are defined as 
        \begin{equation*}
            Q_{4m}= \left< a,b \mid a^{2m}=e, b^2=a^m \text{ and } b^{-1}ab=a^{-1} \right>.
        \end{equation*}
    \end{enumerate}
\end{definition}


\begin{definition}
\cite{steinerberger2023curvature}
\label{def_steinerberger_curv}
    Let $G=(V, E)$ be a finite simple undirected connected graph with $|V|=n$. Let $D=(d(v_i, v_j))_{1 \leq i,j \leq n}$ be the distance matrix of $G$ and $\textbf{1}$ is the $n\times 1 $ column vector with all entries being one. The Steinerberger curvature of $G$ is the measure $\mu_G:V \rightarrow \R$ such that one of the following is satisfied  
    \begin{enumerate}
        \item If the equation
    \begin{equation}
        \label{eq_curvature}
        DK= n\cdot \textbf{1},
    \end{equation}
    has a unique solution, then $\mu_G$ is defined to be that solution.
    
        \item If Equation \eqref{eq_curvature} has more than one solutions, then $\mu_G$ is defined to be the one that achieves $\max_{K} \{ \min_{1\leq i \leq n} K_i \}$, where $K=(K_1, \ldots, K_n)$ runs over all solutions satisfying Equation \eqref{eq_curvature}.

        \item If Equation \eqref{eq_curvature} has no solution, then $\mu_G$ is defined as 
        \begin{equation}
            \mu = D^\dagger (n\cdot \textbf{1}),
        \end{equation}
        where $D^\dagger$ is the Moore-Penrose pseudo-inverse of $D$, which always exists and is unique.
    \end{enumerate}
\end{definition}

For vertex transitive graphs, its Steinerberger curvature is constant and is given by the following proposition. 
\begin{proposition}\cite[Proposition 2]{steinerberger2023curvature}
\label{prop_curvature_vertex_transitive}
    If $G$ is vertex transitive, then it has a constant curvature $K>0$ given by 
    \begin{equation}
        K =\left( \frac{1}{n} \sum_{i=1} ^n d(v,v_i)\right)^{-1},
    \end{equation}
    for any $v\in \cV$.
\end{proposition}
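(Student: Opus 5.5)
The plan is to test the constant vector directly. If it solves \eqref{eq_curvature}, it remains to show it is the curvature selected by Definition \ref{def_steinerberger_curv}.

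Fix $v\in V$ and set $s(v)=\sum_{i=1}^n d(v,v_i)$. For any automorphism $\phi$ we have $d(\phi(v),\phi(w))=d(v,w)$. Since $\phi$ permutes $V$, this gives $s(\phi(v))=s(v)$. Vertex transitivity then makes $s(v)=s$ constant, so every row sum of $D$ equals $s$. The graph is connected with $n\ge 2$, so $s>0$. Take $K=(n/s)\mathbf{1}$; then $(DK)_v=(n/s)\,s=n$ for every $v$. Thus $K$ solves \eqref{eq_curvature}, and its constant value is $\bigl(\frac1n\sum_i d(v,v_i)\bigr)^{-1}>0$. In particular, case (3) of Definition \ref{def_steinerberger_curv} never occurs.

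If $D$ is invertible, case (1) applies and we are done. The main obstacle is case (2): $D$ is singular and \eqref{eq_curvature} has many solutions. We must show that the constant solution maximizes $\min_i K_i$. Take any solution $K'$ and sum the equations: $\mathbf{1}^TDK'=n^2$. Because $D$ is symmetric with all row sums equal to $s$, we have $\mathbf{1}^TD=s\mathbf{1}^T$. Hence $s\sum_i K'_i=n^2$, so $\sum_i K'_i=n^2/s$ for every solution. It follows that $\min_i K'_i\le \frac1n\sum_i K'_i=n/s$. Equality holds if and only if every $K'_i=n/s$, that is, $K'$ is the constant solution.

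So the constant vector is the unique maximizer in case (2) as well, and $\mu_G\equiv K$ in all cases, which proves the proposition. The key point is the identity $\mathbf 1^TD=s\mathbf 1^T$. It uses both symmetry of $D$ and constant row sums, which is exactly where the undirected hypothesis enters.
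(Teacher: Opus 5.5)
Your proof is correct and complete. The paper does not prove this statement itself; it is quoted from \cite[Proposition 2]{steinerberger2023curvature}, so there is no in-paper argument to compare against, and yours stands on its own. You also correctly handle the selection rule in case (2) of Definition \ref{def_steinerberger_curv}, which is needed for the constant solution to actually be $\mu_G$. The existence step is the standard observation: automorphisms preserve the row sums of $D$, so $(n/s)\mathbf{1}$ solves $DK=n\mathbf{1}$ and case (3) cannot occur. The substantive part is case (2). Your identity $\mathbf{1}^TD=s\mathbf{1}^T$ says exactly that $\mu_0=(n/s)\mathbf{1}$ also solves the transposed system $D^T\mu_0=n\mathbf{1}$. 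With that, the pairing computation in the proof of Proposition \ref{prop_equal_curv} gives $\sum_i K'_i=\sum_i \mu_0(i)=n^2/s$ for every solution $K'$. Then $\min_i K'_i\le \frac{1}{n}\sum_i K'_i$, with equality only for constant $K'$, finishes the job, including uniqueness of the maximizer.

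One correction to your closing remark: symmetry of $D$ is not what makes the identity work. For any automorphism $\sigma$, the column sums $c(v)=\sum_{w} d(w,v)$ satisfy $c(\sigma(v))=c(v)$ by the same reindexing, so they are constant. Then $\mathbf{1}^TD\mathbf{1}=ns=nc$ forces $c=s$. Hence $D\mathbf{1}=s\mathbf{1}$ and $\mathbf{1}^TD=s\mathbf{1}^T$ hold for every vertex-transitive digraph. Your argument therefore shows, without change, that such a digraph (e.g.\ a directed Cayley graph $\Gamma(G,S)$ with $\langle S\rangle=G$) has $\mu^{\text{out}}=\mu^{\text{in}}=(n/s)\mathbf{1}$ under Definition \ref{def_steinerbergercurv_directed}. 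So the undirected hypothesis is not essential, and this gives a natural directed counterpart of the proposition for Section \ref{sect:curv_directed}.
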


    For a digraph $G=(V,E)$, a \textit{transition probability matrix} is a matrix $P\in \R^{n\times n}$ with entries $P(u,v)$ which denote the probability of going from $u$ to $v$. In particular, $P(u,v) >0$ if and only if arc $(u,v)$ exists. Moreover, $\sum_{v\in V} P(u,v)=1$ but it is not necessary that $\sum_{u\in V} P(u,v)=1$, cf. \cite{chung2005laplcian_cheeger_directed}. In this article, we shall consider $P$ with entries 
    \begin{equation}
        P(u,v)=
        \begin{cases}
            \dfrac{1}{d^\text{out}_u}, &\quad \text{if $(u,v)\in E$,} \\
            0, &\quad \text{otherwise.}
        \end{cases}
    \end{equation}
    Since $P$ is nonnegative and irreducible, by Perron-Frobenius Theorem \cite{perron1907, frobenius1912}, $P$ has a unique positive left eigenvector $\phi$ corresponds to eigenvalue $\rho=1$, i.e.
      $  \phi P= \phi.$
    Throughout, $\phi$ is normalized so that $\sum_{v\in V} \phi(v)=1$. We can now state the following definition:

\begin{definition}\cite{chung2005laplcian_cheeger_directed}
    Let $\Phi$ be the diagonal matrix with entries $\Phi(v,v)= \phi(v)$. The \textit{Laplacian} of a digraph $G$ is defined as 
    \begin{equation}
        \cL =I- \dfrac{\Phi^\frac{1}{2}P \Phi^{-\frac{1}{2}}+\Phi^{-\frac{1}{2}}P^T \Phi^{\frac{1}{2}} }{2}.
    \end{equation}
\end{definition}

\begin{proposition}\cite[Corollary 1]{chung2005laplcian_cheeger_directed}
     Let $0=\lambda_0\leq\lambda_1\leq\ldots \leq \lambda_{n-1}$ be the eigenvalues of $\cL$. Then,
     \begin{equation}
     \label{eq_lambda_1_inf}
         \lambda_1 = \inf_{\substack{f:V\rightarrow \R \\ \sum_{x\in V}f(x)\phi(x)=0}} \dfrac{\sum_{(u,v)\in E}(f(u)-f(v) )^2 \phi(u)P(u,v) }{2\sum_{v\in V} f(v)^2\phi(v)}.
     \end{equation}
\end{proposition}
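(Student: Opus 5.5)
The statement to prove is the variational characterization of $\lambda_1$ in Equation \eqref{eq_lambda_1_inf}. The plan is to reduce it to the Courant--Fischer (Rayleigh quotient) theorem for the real symmetric matrix $\cL$, and then translate the quotient through the substitution $g=\Phi^{1/2}f$.

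First, $\cL$ is symmetric, since $(\Phi^{1/2}P\Phi^{-1/2})^T=\Phi^{-1/2}P^T\Phi^{1/2}$. Next we check that $\lambda_0=0$ with eigenvector $\phi^{1/2}$, meaning the vector with entries $\phi(v)^{1/2}$. Write $\mathbf 1$ for the all-ones vector. Using $P\mathbf 1=\mathbf 1$ gives
\[
\Phi^{1/2}P\Phi^{-1/2}\phi^{1/2}=\Phi^{1/2}P\mathbf 1=\phi^{1/2}.
\]
Using $\phi P=\phi$, i.e. $P^T\phi=\phi$, gives
\[
\Phi^{-1/2}P^T\Phi^{1/2}\phi^{1/2}=\Phi^{-1/2}P^T\phi=\phi^{1/2}.
\]
Hence $\cL\phi^{1/2}=0$. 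Courant--Fischer then gives
\[
\lambda_1=\inf\Bigl\{\frac{\langle g,\cL g\rangle}{\langle g,g\rangle} : g\perp \phi^{1/2},\ g\neq 0\Bigr\}.
\]
Nonnegativity of the $\lambda_i$, and the fact that $0$ is the smallest eigenvalue, will come out of the numerator identity below, which is manifestly a sum of squares.

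Now put $g=\Phi^{1/2}f$. The constraint $g\perp\phi^{1/2}$ becomes $\sum_x f(x)\phi(x)=0$, and the denominator becomes $\sum_v f(v)^2\phi(v)$. For the numerator,
\[
\langle g,\cL g\rangle=\sum_v f(v)^2\phi(v)-\sum_{u,v} f(u)\phi(u)P(u,v)f(v),
\]
because both symmetrized terms contribute the same bilinear form $f^T\Phi P f$.

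The main step is rewriting this numerator as a sum of squares over arcs. Expanding gives
\[
\sum_{(u,v)\in E}(f(u)-f(v))^2\phi(u)P(u,v)=\sum_u f(u)^2\phi(u)\sum_v P(u,v)+\sum_v f(v)^2\sum_u\phi(u)P(u,v)-2\sum_{u,v}f(u)f(v)\phi(u)P(u,v).
\]
Row-stochasticity of $P$ turns the first term into $\sum_u f(u)^2\phi(u)$. Stationarity of $\phi$ turns the second term into $\sum_v f(v)^2\phi(v)$. The whole expression therefore equals $2\langle g,\cL g\rangle$, which yields the stated formula with its factor $2$ in the denominator.

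The only delicate point is this use of stationarity $\phi P=\phi$ for the in-flow term. It is the place where the directed setting differs from the undirected one, and it is exactly where the Perron eigenvector normalization enters.
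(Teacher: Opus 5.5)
Your argument is correct and complete. Symmetry of $\cL$, the check $\cL\phi^{1/2}=0$ via $P\mathbf{1}=\mathbf{1}$ and $\phi P=\phi$, positive semidefiniteness from the sum-of-squares identity, Courant--Fischer with $\phi^{1/2}$, and the substitution $g=\Phi^{1/2}f$ all go through; the substitution is a bijection because $\phi>0$ by Perron--Frobenius, and $f\neq 0$ is implicitly excluded. The paper gives no proof of its own and only cites Chung's Corollary 1, and your route is essentially Chung's original Rayleigh-quotient derivation.
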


\section{Steinerberger Curvature of Undirected Cayley Graphs}
\label{sect:undirectedcayley}

It is well known that undirected Cayley graphs are vertex-transitive. This enables us to apply Proposition \ref{prop_curvature_vertex_transitive} to obtain an explicit formulas of the Steinerberger curvature of the Cayley graph $\Gamma(G,S)$ for $G=D_n$ and $G=Q_{4m}.$  

\begin{proposition}
\label{prop_dn}
    Let $D_n$ be the dihedral group with generating set $S=\{a, a^{-1}, b, b^{-1} \}$. Then, it has constant Steinerberger curvature of the form:
    \begin{equation}
        K = \frac{2n}{2\floor{\frac{n}{2}} (\floor{\frac{n}{2}}+1) + n^2-2n \floor{\frac{n}{2}}},
    \end{equation}
    where $\floor{\frac{n}{2}}$ denotes the greatest integer of $\frac{n}{2}$.
\end{proposition}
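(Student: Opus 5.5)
The plan is to apply Proposition \ref{prop_curvature_vertex_transitive}. The Cayley graph $\Gamma(D_n,S)$ is vertex transitive and has $|D_n| = 2n$ vertices. It therefore has constant curvature
\[
K = \frac{2n}{\sum_{g\in D_n} d(e,g)},
\]
so the whole proof reduces to computing the distance sum from the identity $e$. Since $b=b^{-1}$, the set $S$ is really $\{a,a^{-1},b\}$, and the graph is $3$-regular.

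\textbf{Step 1: identify the graph.} Write the elements as $a^k$ and $a^kb$ for $0\le k\le n-1$. Right multiplication gives $a^k\sim a^{k\pm1}$ and $a^k\sim a^kb$. Using the relation $ba=a^{n-1}b$, we get $a^kb\cdot a^{\pm1}=a^{k\mp1}b$, so $a^kb\sim a^{k\pm1}b$. Hence $\Gamma(D_n,S)$ is the prism $C_n\,\square\,K_2$: two $n$-cycles (the rotations and the reflections), joined by the rungs $a^k$ -- $a^kb$.

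\textbf{Step 2: distances from $e$.} I claim
\[
d(e,a^k)=\min(k,n-k), \qquad d(e,a^kb)=\min(k,n-k)+1 .
\]
The upper bounds are clear: walk around the first cycle, and cross one rung if needed. For the lower bounds I use the projection onto the cycle coordinate $k$. Each edge changes $k$ by at most $\pm1$ modulo $n$, so at least $\min(k,n-k)$ steps are needed. Reaching the reflection layer additionally requires a rung edge, which does not change $k$. This gives the extra $+1$.

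\textbf{Step 3: sum the distances.} Put $f=\floor{\frac n2}$. Splitting the range of $k$ at $f$ gives
\[
\sum_{k=0}^{n-1}\min(k,n-k)=\frac{f(f+1)}{2}+\frac{(n-f)(n-f-1)}{2}.
\]
The total distance sum is twice this quantity plus $n$, one extra unit for each reflection. Expanding $(n-f)(n-f-1)=n^2-2nf+f^2-n+f$, the total becomes
\[
2f(f+1)+n^2-2nf .
\]
Substituting into $K=2n/(\text{total})$ yields the stated formula.

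As a sanity check, $n=4$ gives the cube $Q_3$ with distance sum $12$ and $K=2/3$, which matches the formula. The only real care point is the lower bound in Step 2 together with the algebra in Step 3. In particular, the split at $f$ must be handled uniformly for $n$ even and $n$ odd, which the floor expression does automatically.
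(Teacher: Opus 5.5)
Your proposal is correct and follows essentially the same route as the paper: apply the vertex-transitive curvature formula, use $d(e,a^k)=\min\{k,n-k\}$ and $d(e,a^kb)=\min\{k,n-k\}+1$, and sum by splitting the range of $k$ at $\lfloor n/2\rfloor$. Your identification of $\Gamma(D_n,S)$ as the prism $C_n\,\square\,K_2$, together with the projection argument, also supplies the lower bounds on these distances, which the paper only asserts.
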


\begin{proof}
    The group $D_n$ can be represented by $D_n =\{e, a, \ldots, a^{n-1}, b, ab, \ldots\ , a^{n-1}b \}$.
    For $k=0, \ldots, n-1$, since $a^k a^{n-k}=e$, we have 
    \[
    d(e, a^k)=\min\{k, n-k\} \quad \text{ and } \quad 
    d(e, a^kb)=\min\{k, n-k\}+1.
    \]
    Therefore, 
    \begin{align*}
        \sum_{i=1}^{2n} d(e, g_i) &= \sum_{k=0}^{n-1} d(e, a^k) + \sum_{k=0}^{n-1} d(e, a^kb) \\ 
        &= \sum_{k=0}^{n-1} \min \{k, n-k\} + \sum_{k=0}^{n-1} \left[ \min \{k, n-k\} + 1 \right] \\
        &= 2 \sum_{k=0}^{\floor{\frac{n}{2}}} k + 2\sum_{k=\floor{\frac{n}{2}}+1}^{n-1} (n-k) + n \\ 
        &= 2\frac{\floor{\frac{n}{2}}(\floor{\frac{n}{2}}+1)}{2} + 2n\left(n-\floor*{\frac{n}{2}}-1 \right) -2\sum_{k=\floor{\frac{n}{2}}+1}^{n-1} k +n \\
        &= \floor*{\frac{n}{2}}\left(\floor*{\frac{n}{2}}+1 \right) + 2n\left(n-\floor*{\frac{n}{2}}-1 \right) -2 \left[ \frac{(n-1)n}{2} - \frac{\floor{\frac{n}{2}}\left(\floor{\frac{n}{2}}+1 \right)}{2} \right] + n \\ 
        &= 2\floor*{\frac{n}{2}} \left(\floor*{\frac{n}{2}}+1 \right) + n^2-2n \floor*{\frac{n}{2}}.
    \end{align*}
     By Proposition \ref{prop_curvature_vertex_transitive}, it follows that 
         \[
         K = \left( \frac{1}{2n} \sum_{i=1} ^{2n} d(e,g_i)\right)^{-1} 
         = \frac{2n}{2\floor{\frac{n}{2}} (\floor{\frac{n}{2}}+1) + n^2-2n \floor{\frac{n}{2}}}.
         \]
\end{proof}

\begin{proposition}
\label{prop_q4m}
    Let $Q_{4m}$, $m \geq 2$ be the generalized quaternion group with generating set $S=\{ a, b, a^{-1}, b^{-1}\}$. Then, it has constant Steinerberger curvature of the form
    \begin{equation}
        K=\frac{4}{2m+1}.
    \end{equation}
\end{proposition}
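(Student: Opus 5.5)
Since undirected Cayley graphs are vertex-transitive, I would proceed as in the proof of Proposition \ref{prop_dn} and invoke Proposition \ref{prop_curvature_vertex_transitive}. It then suffices to show
\[
\sum_{g\in Q_{4m}} d(e,g)=\frac{4m(2m+1)}{4}=m(2m+1),
\]
because this gives $K=\bigl(\tfrac{1}{4m}\, m(2m+1)\bigr)^{-1}=\tfrac{4}{2m+1}$. Write $Q_{4m}=\{a^k,\ a^kb : 0\le k\le 2m-1\}$. The main difference from the dihedral case is that $b^2=a^m$ and $b^{-1}=b^3=a^mb$ create shortcuts, so the distances are not simply the cyclic distances in $\langle a\rangle$.

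\textbf{Distances to $a^k$.} Using $b^{-1}ab=a^{-1}$, any word in $a^{\pm1},b^{\pm1}$ can be rewritten as $a^jb^\epsilon$ without increasing the number of $a^{\pm1}$ letters. Hence a shortest word for $a^k$ uses either no $b$-letters or exactly two, and two $b$-letters contribute $a^{\pm m}$. This gives
\[
d(e,a^k)=\min\{c(k),\ 2+c(k-m)\},
\]
where $c(j)=\min\{j \bmod 2m,\ 2m-(j \bmod 2m)\}$ is the cyclic distance in $\mathbb Z_{2m}$.

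\textbf{Distances to $a^kb$.} Here a shortest word uses exactly one $b^{\pm1}$. Since $b^{-1}=a^mb$, this gives
\[
d(e,a^kb)=1+\min\{c(k),\ c(k-m)\},
\]
that is, $1$ plus the distance from $k$ to the set $\{0,m\}$ in $\mathbb Z_{2m}$. I would check both formulas on $Q_8$, where the distance sum is $0+4\cdot1+3\cdot2=10=2\cdot5$, as required.

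\textbf{Summing.} For the coset $\langle a\rangle b$, the sum is $2m+2\sum_{j=0}^{m-1}\min\{j,m-j\}$. For $\langle a\rangle$, I would split the range of $k$ into the part where $c(k)\le 2+c(k-m)$ (near $0$) and the part near $m$ where the shortcut is used. Each piece is an arithmetic sum, and I would handle $m$ even and $m$ odd separately, as with the floor functions in Proposition \ref{prop_dn}.

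\textbf{Main obstacle.} The hardest step is justifying the distance formulas rigorously, in particular the normal-form argument that a geodesic never needs more than two $b$-letters. The case bookkeeping near the threshold $|k-m|\approx m/2$ in the final sum is the other place where care is needed. If the parity cases for $\langle a\rangle$ turn out not to combine with the coset sum to give exactly $m(2m+1)$, I would first recheck the threshold and the small cases $m=2,3$ before trusting the closed form.
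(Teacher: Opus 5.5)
You have the distance formulas right, and the normal-form argument behind them is sound. Moving every $b^{\pm1}$ to the right turns a word with $p$ letters $b^{\pm1}$ and $q$ letters $a^{\pm1}$ into $a^{J}b^{E}$ with $|J|\le q$, and each possible value $b^{E}\in\{e,a^m,b,a^mb\}$ is already attained with $p\le 2$.

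The step that fails is the final summation, and no case bookkeeping can rescue it: with these correct distances the sum is \emph{not} $m(2m+1)$ once $m\ge 3$. Take $m=3$ and run a breadth-first search in $\Gamma(Q_{12},S)$. It puts $e$ at distance $0$ and the generators $a,a^5,b,a^3b$ at distance $1$. All seven remaining elements lie at distance $2$, for instance $a^3=b^2$, $a^2b=b^{-1}a$ and $a^4b=ab^{-1}$. Hence $\sum_g d(e,g)=4+14=18\neq 21$, so $K=\tfrac{12}{18}=\tfrac23\neq\tfrac47$.

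In general your formulas give $\sum_k d(e,a^kb)=2m+2\lfloor m^2/4\rfloor$. Writing $t=|k-m|$, they also give $\sum_k d(e,a^k)=2+2\sum_{t=1}^{m-1}\min\{m-t,\,t+2\}$. The total is $m^2+4m-2$ for even $m$ and $m^2+4m-3$ for odd $m$, which equals $m(2m+1)$ only at $m=2$. That is why your $Q_8$ check passed, and why the contingency you flagged at the end would be triggered already at $m=3$.

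So the statement as written is false for every $m\ge 3$, and the paper's own proof is where the error enters. It takes $d(e,a^k)=\min\{k,2m-k\}$, giving $d(e,a^m)=m$ and overlooking $a^m=b^2$. It also takes $d(e,a^kb)=\min\{k,2m-k\}+1$ for $k\neq m$, overlooking $a^kb=a^{k-m}b^{-1}$; the only shortcut it keeps is $d(e,a^mb)=1$.

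Carried through, your method establishes the corrected curvature $K=\frac{4m}{m^2+4m-2}$ for $m$ even and $K=\frac{4m}{m^2+4m-3}$ for $m$ odd. This reduces to $\tfrac45$ at $m=2$. Trust your distance formulas rather than the target closed form.
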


\begin{proof}
    The group $Q_{4m}$ can be represented as 
    $$Q_{4m}= \{e, a, \ldots, a^{2m-1}, b, ab, \ldots, a^m b, \ldots, a^{2m-1}b \}.$$
    Note that $a^{-1}=a^{2m-1}$ and $b^{-1}= a^m b$. Moreover, observe that
    \begin{enumerate}
        \item $d(e, a^m b)=1$.
        \item $d(e, a^k)=\min \{k, 2m-k \}$ for $k=0, \ldots, 2m-1$.
        \item $d(e, a^k b)=\min \{k, 2m-k\}+1$ for $k=0, \ldots, \hat{m}, \ldots, 2m-1$, where $\hat{m}$ means that $m$ is omitted.
    \end{enumerate}

    Hence, 
    \begin{align*}
        \sum_{k=1}^{4m} d(e, g_k) &= d(e, a^m b) + \sum_{k=0}^{2m-1} d(e, a^k) + \sum_{k=0}^{m-1} d(e, a^k b) +\sum_{k=m+1}^{2m-1} d(e, a^k b) \\
        &=1+ \sum_{k=0}^{2m-1} \min \{k, 2m-k \} 
        + \sum_{k=0}^{m-1} (\min \{k, 2m-k\}+1 ) + \sum_{k=m+1}^{2m-1} (\min \{k, 2m-k\}+1 ) \\
        &= 1 +  \sum_{k=0}^{m} k + \sum_{k=m+1}^{2m-1} (2m-k) + \sum_{k=0}^{m-1} (k+1) + \sum_{k=m+1}^{2m-1} (2m-k+1) \\
        &= 1+\frac{m(m+1)}{2}+\frac{(m-1)m}{2}+  \frac{m(m+1)}{2} + \left( \frac{m(m+1)}{2}-1 \right)\\
        &= m(2m+1).
    \end{align*}
    It follows that 
    \[
    K= \left(\frac{m(2m+1)}{4m} \right)^{-1} = \frac{4}{2m+1}. \qedhere
    \]
\end{proof}

\begin{remark}
 In Propositions \ref{prop_dn} and \ref{prop_q4m}, the Steinerberger curvature is constant and positive. In particular,  we have $\mu_G = K \textbf{1}$ where $K$ is the common curvature at every vertex. Thus, the \textit{total} curvature is $\|\mu_G \|_{\ell_1} = \sum_{v \in V} \mu_G(v).$ 
 \begin{enumerate}[(i)]
 \item  For the dihedral group $D_n,$ since $|D_n|=2n,$ and since 
 \[
 K = 
 \begin{cases}
 \dfrac{4}{n+2}, &n=2k,\\[10pt]
 \dfrac{4n}{n^2+2n-1}, & n=2k+1,
 \end{cases} 
 \]
 we obtain
 \[
 \|\mu_{\Gamma(D_n,S)} \|_{\ell_1} = 2nK = 
  \begin{cases}
  \dfrac{8n}{n+2}, &n=2k,\\[10pt]
  \dfrac{8n^2}{n^2+2n-1}, & n=2k+1.
  \end{cases} 
 \] 
 \item For the dihedral group $Q_{4m},$ since $|Q_{4m}|=4m,$
 \[
\|\mu_{\Gamma(Q_{4m},S)} \|_{\ell_1} = 4mK = \frac{16m}{2m+1}.
 \]
 \end{enumerate}
 For both cases, it is clear that for large group orders, both curvatures tend to the same limiting value despite that they are different non-abelian groups:
 \[
 \lim_{n \to \infty} \|\mu_{\Gamma(D_n,S)} \|_{\ell_1} = 8 =\lim_{m \to \infty} \|\mu_{\Gamma(Q_{4m},S)} \|_{\ell_1} .
 \]
\end{remark}

We end this section by comparing these curvatures with the Lin--Lu--Yau Ricci curvature obtained in \cite{mizukaiRicci_Cayley2024}, see Tables \ref{tab:dn} and \ref{tab:q4m}.

\begin{table}
\centering
\makebox[0pt][c]{\parbox{1.1\textwidth}{%
    \begin{minipage}[b]{0.6\hsize}\centering
    \renewcommand{\arraystretch}{1.4}
\begin{tabular}{|c|c|c|c|c|}
\hline
\multicolumn{5}{|c|}{$\Gamma(D_n, S),\quad S=\{a,a^{-1},b=b^{-1}\}$} \\
\hline
$\kappa(x,y)$ & $n=3$ & $n=4$ & $n=5$ & $n\geq 6$ \\
\hline
$\kappa(e,a)$ & $1$ & $\frac{2}{3}$ & $\frac{1}{3}$ & 0 \\
\hline
$\kappa(e,b)$ & $\frac{2}{3}$ & $\frac{2}{3}$ & $\frac{2}{3}$ & $\frac{2}{3}$ \\
\hline
\end{tabular}
\caption{$\kappa(e,-)$ for $\Gamma(D_n,S)$}
\label{tab:dn}
\end{minipage}
    \hspace{-3cm}    
\begin{minipage}[b]{0.6\hsize}\centering
\renewcommand{\arraystretch}{1.4}
\begin{tabular}{|c|c|c|c|}
\hline
\multicolumn{4}{|c|}{$\Gamma(Q_{4m}, S),\quad S=\{a,a^{-1},b,b^{-1}\}$} \\
\hline
$\kappa(x,y)$ & $m=2$ & $m=3$ & $m\geq 4$ \\
\hline
$\kappa(e,a)$ & $\frac{1}{2}$ & $\frac{1}{4}$ & $0$ \\
\hline
$\kappa(e,b)$ & $\frac{1}{2}$ & $\frac{1}{2}$ & $\frac{1}{2}$ \\
\hline
\end{tabular}
\caption{$\kappa(e,-)$ for $\Gamma(Q_{4m},S)$}
\label{tab:q4m}
\end{minipage}
}}
\end{table}

Although $\mu_{\Gamma(D_4, S)}=\frac{2}{3}\mathbf{1_4}$ and $\kappa(e,a)=\kappa(e,b)=\frac{2}{3}$, we see that the Steinerberger curvature for $\Gamma(D_n,S)$ and $\Gamma(Q_{4m},S)$ are, in general, different from that of Lin--Lu--Yau for other values of $n$. Another observation is that for $\Gamma(D_n,S)$, $\lim_{n \to \infty} \mu_{\Gamma(D_n,S)}= \mathbf{0_n}$ but $\kappa(e,b)=\frac{2}{3}\neq 0$ for $n\geq 6$. Similar observation can be made for $ \mu_{\Gamma(Q_{4m},S)}$.

These examples show that the Steinerberger curvature and Lin--Lu--Yau Ricci curvature do not agree in general, even for highly symmetric Cayley graphs. 
The underlying reason is probably due to the distinctive difference between them: Steinerberger curvature is a \textit{vertex-based} invariant determined by the global distance matrix, whereas Lin--Lu--Yau Ricci curvature is an \textit{edge-based} invariant determined by optimal transport between local probability measures.
The partial agreements observed above suggest that the two curvature notions may still be related in a certain way. Establishing a precise relation between these curvatures remain a natural direction for future work.

\section{Steinerberger Curvature of Directed Graphs}
\label{sect:curv_directed}
For a digraph $G=(V,E)$ with $|V|=n$, since the distance matrix $D$ is not symmetric in general, it is natural to consider two notions of curvature that correspond to the in- and out-direction of arcs. With this, we provide a generalization of Steinerberger curvature to the case of strongly connected simple digraphs, for which we will refer it as \textit{\textbf{digraphs}} henceforth. 

\begin{definition}[Steinerberger curvature for digraphs]
\label{def_steinerbergercurv_directed}
    Let $G$ be a digraph. Define the \textit{out-curvature }$\mu^{\text{out}}_G$ of $G$ as either one of the following: 
        \begin{enumerate}
        \item The unique solution to $DK=n \cdot \textbf{1}$.

         \item If $DK=n\cdot \textbf{1}$ has more than one solutions, then $\mu^{\text{out}}_G$ is defined as the one such that $\max_{K} \{ \min_{1\leq i \leq n} K_i \}$ is achieved, where $K=(K_1, \ldots, K_n)$ runs over solutions satisfying $DK=n\cdot \textbf{1}$.

        \item If $DK=n\cdot \textbf{1}$ has no solution, then $\mu^{\text{out}}_G$ is defined as 
        \begin{equation}
           \mu^{out}_G = D^\dagger (n\cdot \textbf{1}),
        \end{equation}
        where $D^\dagger$ is the Moore-Penrose inverse of $D$, which always exists and is unique.
    \end{enumerate}
    Similarly, the \textit{in-curvature} $\mu^{in}_G$ is defined by replacing $DK=n\cdot \textbf{1}$ with $D^TK=n\cdot \textbf{1}$ in the above definition.
\end{definition}

\begin{remark}
More explicitly, this means that for each $v\in V$, $\mu^{\text{out}}_G(w)$ and $\mu^{\text{in}}_G(w)$ must satisfy
\begin{equation}
\label{eq_outcurv}
    \sum_{w\in V} d(v,w)\mu^{\text{out}}_G(w) =n,
\end{equation}
and
\begin{equation}
\label{eq_incurv}
    \sum_{w\in V} d(w,v)\mu^{\text{in}}_G(w) =n,
\end{equation}
respectively. It is not difficult to see that if $\mu \in \R^{n}_{\geq0}$ is a nonnegative solution to $D\mu=n1_n$ (or $D^T\mu=n1_n$), then 
\[
\min_{v \in V} \mu(v) \leq \dfrac{n}{n-1}
\]
with equality if and only if $D=J_n$, the $n\times n$ matrix with all entries being one, which corresponds to $G=\overleftrightarrow{K_n}$. 
\end{remark}

As digraphs may be highly asymmetric, a single curvature notion is no longer sufficient. We therefore define two curvatures for digraphs, namely the in-curvature and the out-curvature. If the distance matrix $D$ is symmetric, so that $G$ can be identified with an undirected graph, then these two curvatures coincide, that is, 
$
\mu^\text{out}_G=\mu^\text{in}_G.
$
In this case, our definition reduces to the usual Steinerberger curvature. When the underlying graph $G$ is clear from the context, we shall write $\mu^{\text{out}}$ and  $\mu^{\text{in}}$ instead of $\mu^{\text{out}}_G$ and $\mu^{\text{in}}_G,$ respectively.


Throughout this section, we only consider digraphs for which both systems $DK=n \cdot \textbf{1}$ and $D^TK=n \cdot \textbf{1}$ admit solutions, corresponding to Cases (1) and (2) in Definition \ref{def_steinerbergercurv_directed}. In \cite{cushing2025note_steinerbeger}, the authors asked when the total Steinerberger curvature of an undirected graph can vanish or become negative. Motivated by this question, we provide sufficient and necessary conditions for a vertex of a digraph to possess negative in- and out-curvatures. The same argument can be extended naturally to obtain sufficient and necessary conditions for several vertices to have negative in- and out-curvatures.
\begin{proposition}
\label{prop_suff_negcurv}
    Let $G=(V,E)$ be a digraph with vertex set $V=\{v_1, \ldots, v_n\}$.
    Let the distance matrix of $G$ be expressed as 
    \begin{equation*}
    D= \left(
    \begin{matrix}
    0 & r_{n-1}^T \\ 
    c_{n-1} & D_{n-1}
    \end{matrix} \right),
    \end{equation*}
     where $r_{n-1}=d(v_1, \cdot ),  c_{n-1}=d(\cdot, v_1) \in \R^{n-1}$ and $D_{n-1} \in \R^{(n-1)\times(n-1)}$. Suppose that $r_{n-1}^T D_{n-1}^{-1}c_{n-1} \neq 0$ and $\det(D)\det(D_{n-1})<0$. Then, 
    \begin{enumerate}
        \item $v_1$ has negative out-curvature $\mu^{\text{out}}(v_1)<0$ if and only if $r_{n-1}^T D_{n-1}^{-1}\textbf{1}_{n-1} < 1$;
        
        \item $v_1$ has negative in-curvature $\mu^{\text{in}}(v_1)<0$ if and only if $c_{n-1}^T (D_{n-1}^T)^{-1}\textbf{1}_{n-1} < 1$.
    \end{enumerate}
\end{proposition}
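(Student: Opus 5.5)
The plan is to solve the linear system $DK=n\cdot\textbf{1}$ explicitly in block form, using the partition given in the statement, and to read off the sign of the first coordinate $K_1=\mu^{\text{out}}(v_1)$. First observe that the hypothesis $\det(D)\det(D_{n-1})<0$ forces both determinants to be nonzero. Hence $D_{n-1}$ is invertible, and $D$ is invertible as well, so we are in Case (1) of Definition \ref{def_steinerbergercurv_directed}. The out-curvature is therefore the unique solution of $DK=n\cdot\textbf{1}$, and likewise the in-curvature is the unique solution of $D^TK=n\cdot\textbf{1}$ because $\det(D^T)=\det(D)\neq 0$.

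Write $K=(x,y)$ with $x\in\R$ and $y\in\R^{n-1}$. The block equations are
\begin{equation*}
r_{n-1}^Ty=n, \qquad c_{n-1}x+D_{n-1}y=n\textbf{1}_{n-1}.
\end{equation*}
From the second equation, $y=D_{n-1}^{-1}(n\textbf{1}_{n-1}-c_{n-1}x)$. Substituting into the first gives
\begin{equation*}
n\, r_{n-1}^TD_{n-1}^{-1}\textbf{1}_{n-1}-x\, r_{n-1}^TD_{n-1}^{-1}c_{n-1}=n .
\end{equation*}
Set $s=r_{n-1}^TD_{n-1}^{-1}c_{n-1}$, which is nonzero by hypothesis. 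Then
\begin{equation*}
\mu^{\text{out}}(v_1)=x=\frac{n\left(r_{n-1}^TD_{n-1}^{-1}\textbf{1}_{n-1}-1\right)}{s}.
\end{equation*}

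The key step is to determine the sign of $s$, and this is where the determinant hypothesis enters. By the Schur complement formula for the block matrix $D$ with invertible lower-right block,
\begin{equation*}
\det(D)=\det(D_{n-1})\left(0-r_{n-1}^TD_{n-1}^{-1}c_{n-1}\right)=-s\det(D_{n-1}).
\end{equation*}
Hence $\det(D)\det(D_{n-1})=-s\det(D_{n-1})^2$. The hypothesis that this quantity is negative is therefore equivalent to $s>0$. With $s>0$ and $n>0$, we get $x<0$ if and only if $r_{n-1}^TD_{n-1}^{-1}\textbf{1}_{n-1}<1$, which proves (1).

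For (2), apply the same argument to $D^T$, whose block form is
\begin{equation*}
D^T=\left(\begin{matrix}0 & c_{n-1}^T\\ r_{n-1} & D_{n-1}^T\end{matrix}\right).
\end{equation*}
The roles of $r_{n-1}$ and $c_{n-1}$ are interchanged, and $D_{n-1}$ is replaced by $D_{n-1}^T$. The relevant scalar becomes $c_{n-1}^T(D_{n-1}^T)^{-1}r_{n-1}$, which is the transpose of $s$ and hence equals $s>0$. Moreover $\det(D^T)\det(D_{n-1}^T)=\det(D)\det(D_{n-1})<0$, so the hypotheses carry over unchanged. The same computation then gives $\mu^{\text{in}}(v_1)<0$ if and only if $c_{n-1}^T(D_{n-1}^T)^{-1}\textbf{1}_{n-1}<1$. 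The only real care needed in the whole argument is the Schur complement sign bookkeeping; everything else is direct substitution.
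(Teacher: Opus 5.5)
Your proposal is correct and follows essentially the same route as the paper. You eliminate the last $n-1$ coordinates from the block system to get $\mu^{\text{out}}(v_1)=n\bigl(r_{n-1}^TD_{n-1}^{-1}\mathbf{1}_{n-1}-1\bigr)/s$, and then use the Schur complement identity $\det(D)=-s\det(D_{n-1})$ to force $s>0$. You also spell out two points the paper leaves implicit: $\det(D)\neq 0$ puts you in Case (1) of Definition \ref{def_steinerbergercurv_directed}, and part (2) follows by transposing, since $c_{n-1}^T(D_{n-1}^T)^{-1}r_{n-1}=s$.
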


\begin{proof}
    Let $\mu^{\text{out}}=(\mu^{\text{out}}(v_1), \mu^{\text{out}}_{n-1})$, where $\mu^{\text{out}}_{n-1}\in \R^{n-1}$. Solving for $D\mu^{\text{out}}=n\textbf{1}_n$, we have 
    \begin{align*}
        r^T_{n-1}\mu^{\text{out}}_{n-1}&=n, \\
        \mu^{\text{out}}(v_1)c_{n-1}+D_{n-1}\mu^{\text{out}}_{n-1}&=n\textbf{1}_{n-1}.
    \end{align*}
    By assumption, $D_{n-1}$ is invertible. Then,
         \[
         \mu^{\text{out}}(v_1)r^T_{n-1}D_{n-1}^{-1}c_{n-1}+r^T_{n-1}\mu^{\text{out}}_{n-1}=nr^T_{n-1}D_{n-1}^{-1}\textbf{1}_{n-1}, 
         \]
         which further simplifies to
         \begin{equation}\label{eq:prop4.3_eq1}
         \mu^{\text{out}}(v_1) 
         = n\dfrac{(r^T_{n-1}D_{n-1}^{-1}\textbf{1}_{n-1}-1)}{r^T_{n-1}D_{n-1}^{-1}c_{n-1}},
         \end{equation}
    which is well-defined following the assumption. By using the Schur complement on $D$, it follows that 
    \[
    \det(D)=-r^T_{n-1}D_{n-1}^{-1}c_{n-1}\det(D_{n-1}),
    \]
    giving $r^T_{n-1}D_{n-1}^{-1}c_{n-1}=-\dfrac{\det(D)}{\det(D_{n-1})}$. Since $\det(D)\det(D_{n-1}) <0$, we have 
    \[
    r^T_{n-1}D_{n-1}^{-1}c_{n-1}>0.
    \]
    Then, it follows from \eqref{eq:prop4.3_eq1} that $\mu^{\text{out}}(v_1)<0$ if and only if $r^T_{n-1}D_{n-1}^{-1}\textbf{1}_{n-1}<1.$
    

    The proof for negative in-curvature is similar. We omit the proof. 
\end{proof}

To illustrate Proposition \ref{prop_suff_negcurv}, we consider the following example.
\begin{example}
    Consider the directed cycle graph $C_5$ with vertex set $\{v_1, v_2,v_3,v_4,v_5\}$ and an extra arc $(v_1,v_4),$ as illustrated in Figure \ref{fig:C_5+arc}. The distance matrix is
    \begin{align*}
    D = \begin{pmatrix}
    0 & 1 & 2 & 1 & 2 \\
    4 & 0 & 1 & 2 & 3 \\
    3 & 4 & 0 & 1 & 2 \\
    2 & 3 & 4 & 0 & 1 \\
    1 & 2 & 3 & 2 & 1
    \end{pmatrix}, \quad 
    \text{with }
    D_{n-1} = \begin{pmatrix}
     0 & 1 & 2 & 3 \\
     4 & 0 & 1 & 2 \\
     3 & 4 & 0 & 1 \\
     2 & 3 & 2 & 1
    \end{pmatrix}.
    \end{align*}
    It is easy to verify that $r_{n-1}^T D_{n-1}^{-1} c_{n-1}=\dfrac{53}{18}\neq 0.$ 
    Also,
    \[
    \det D \det D_{n-1} = 265(-90)<0.
    \]
    Moreover, 
    $r_{n-1}^T
    =\begin{pmatrix}
       1 & 2 &1 &2 
    \end{pmatrix}$. By a simple computation, $r_{n-1}^TD_{n-1}^{-1}1_{n-1}=\dfrac{8}{9} <1$. By Proposition \ref{prop_suff_negcurv}, $\mu^{\text{out}}(v_1)<0$. Indeed, 
    \begin{align*}
    \mu^{\text{out}} =5D^{-1} 1_5 
    = \frac{1}{53} 
     \begin{pmatrix}
     -10 &
     30 &
     30 &
     25 &
     75
     \end{pmatrix}^T.
    \end{align*}
    implying $\mu^{\text{out}}(v_1)=-\dfrac{10}{53}<0$.
\end{example}

\begin{figure}[htp!]
    \centering
\begin{tikzpicture}[
    scale=1.5,
    vertex/.style={circle, fill=black, inner sep=0pt, minimum size=5.5pt},
    directed/.style={
        thick, black,
        shorten >=3pt, shorten <=3pt,
        postaction={decorate, decoration={markings, mark=at position 0.55 with {\arrow{Stealth[scale=1.3]}}}}
    }
]

    \node[vertex, label=above:{\large $v_1$}] (v1) at (90:1.5cm) {};
    \node[vertex, label=right:{\large $v_2$}] (v2) at (18:1.5cm) {};
    \node[vertex, label=below right:{\large $v_3$}] (v3) at (306:1.5cm) {};
    \node[vertex, label=below left:{\large $v_4$}] (v4) at (234:1.5cm) {};
    \node[vertex, label=left:{\large $v_5$}] (v5) at (162:1.5cm) {};

    \draw[directed] (v5) -- (v1); 
    \draw[directed] (v1) -- (v2); 
    \draw[directed] (v2) -- (v3); 
    \draw[directed] (v3) -- (v4); 
    \draw[directed] (v4) -- (v5); 

    \draw[directed] (v1) -- (v4); 

\end{tikzpicture}
    \caption{Directed cycle graph with added arc $(v_1,v_4)$}
    \label{fig:C_5+arc}
\end{figure}

The following proposition gives a sufficient condition on when the out-curvature is the permutation of the in-curvature.
\begin{proposition}
\label{prop_permutationofcurv}
   Let $G$ be a digraph. Suppose the distance matrix of $G$ is invertible and satisfies $PDQ=D^T$ for some permutation matrices $P$ and $Q$. Then, $\mu^{\text{out}}=Q\mu^{\text{in}}$.
\end{proposition}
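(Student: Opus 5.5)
The plan is a direct linear-algebra computation. Because $D$ is invertible, $D^T$ is invertible as well. Hence both systems $DK=n\cdot\textbf{1}$ and $D^TK=n\cdot\textbf{1}$ have unique solutions, and we are in Case (1) of Definition \ref{def_steinerbergercurv_directed} for both curvatures. This gives the explicit formulas
\[
\mu^{\text{out}} = n D^{-1}\textbf{1}, \qquad \mu^{\text{in}} = n (D^T)^{-1}\textbf{1}.
\]
So it suffices to express $(D^T)^{-1}$ in terms of $D^{-1}$ using the hypothesis $PDQ=D^T$.

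The key steps, in order, are as follows.
\begin{enumerate}
    \item Permutation matrices are orthogonal, so $P^{-1}=P^T$ and $Q^{-1}=Q^T$. Inverting $PDQ=D^T$ then gives
    \[
    (D^T)^{-1} = Q^{-1}D^{-1}P^{-1} = Q^T D^{-1} P^T .
    \]
    \item Every row of a permutation matrix contains exactly one entry equal to $1$. Hence $P^T\textbf{1}=\textbf{1}$: applying a permutation to the all-ones vector leaves it unchanged.
    \item Substituting into the formula for the in-curvature yields
    \[
    \mu^{\text{in}} = n\,Q^T D^{-1}P^T\textbf{1} = n\,Q^T D^{-1}\textbf{1} = Q^T\mu^{\text{out}}.
    \]
    \item Multiplying on the left by $Q$ and using $QQ^T=I$ gives $\mu^{\text{out}}=Q\mu^{\text{in}}$, as claimed.
\end{enumerate}

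There is no real obstacle here. The only points that need care are confirming that invertibility of $D$ forces uniqueness, so that no max–min selection from Case (2) is involved, and keeping track of which side $P$ and $Q$ act on. The matrix $P$ is absorbed into the all-ones vector, while $Q$ survives as the permutation that relates the two curvature vectors.
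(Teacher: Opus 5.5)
Your proof is correct and is essentially the paper's argument: both rely on $P^T\textbf{1}=\textbf{1}$ and on the invertibility of $D$ (which forces Case (1)). The paper substitutes $PDQ=D^T$ into $D^T\mu^{\text{in}}=n\textbf{1}$ to show that $Q\mu^{\text{in}}$ solves $DK=n\textbf{1}$ and then uses uniqueness, whereas you invert $PDQ=D^T$ explicitly — the same computation written in a different order.
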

\begin{proof}
    Suppose $PDQ=D^T$. From $D^T \mu^{\text{in}}=n\mathbf{1_n}$, we have
    \begin{align*}
        PDQ\mu^{\text{in}}=n\mathbf{1_n}.
    \end{align*}
    Left multiplication on both sides by $P^{-1}=P^T$ gives
      \begin{align*}
    DQ\mu^{\text{in}}=P^T(n\mathbf{1_n})=n\mathbf{1_n}.
    \end{align*}
    Thus, 
    \begin{align*}
        D\mu^{\text{out}}-DQ\mu^{\text{in}}=n\mathbf{1_n}-n\mathbf{1_n}=\mathbf{0_n}.
    \end{align*}
    Since $D$ is invertible, it follows that $\mu^{\text{out}}=Q\mu^{\text{in}}$.
\end{proof}

Thus, from Proposition \ref{prop_permutationofcurv}, one question naturally raised: \textit{is there a digraph in which the out-curvature is not a permutation of in-curvature}? We give a non-trivial example here.
\begin{example}
Consider the digraph modified from the undirected wheel graph $W_4$ with vertex set $V=\{v_1, v_2,v_3,v_4\},$ as illustrated in Figure \ref{fig:wheeldigraph}. It is immediate to observe that the distance matrix and its transpose are respectively given by
\begin{align*}
D = \begin{pmatrix}
0 & 1 & 2 & 1 \\
2 & 0 & 1 & 1 \\
1 & 2 & 0 & 1 \\
3 & 1 & 2 & 0
\end{pmatrix}, \quad 
D^T = \begin{pmatrix}
0 & 2 & 1 & 3 \\
1 & 0 & 2 & 1 \\
2 & 1 & 0 & 2 \\
1 & 1 & 1 & 0
\end{pmatrix}.
\end{align*}
Both $D$ and $D^T$ are invertible, thus the in- and out-curvature are respectively 
\begin{align*}
\mu^{\text{in}}=4(D^T)^{-1} 1_4 = \begin{pmatrix}
\dfrac{4}{3} &
\dfrac{4}{3} &
\dfrac{4}{3} &
0
\end{pmatrix}^T, \quad 
\mu^{\text{out}} =4D^{-1} 1_4= \begin{pmatrix}
\dfrac{2}{3} &
\dfrac{2}{3} &
\dfrac{2}{3} &
2
\end{pmatrix}^T.
\end{align*}

Interestingly, if we delete the arc $(v_2, v_4)$, then the in- and out-curvature become 
\begin{align*}
\mu^{\text{in}} = \begin{pmatrix}
\dfrac{6}{5} &
\dfrac{4}{5} &
\dfrac{6}{5} &
\dfrac{2}{5}
\end{pmatrix}^T, \quad 
\mu^{\text{out}} = \begin{pmatrix}
\dfrac{2}{5} &
\dfrac{6}{5} &
\dfrac{4}{5} &
\dfrac{6}{5}
\end{pmatrix}^T.
\end{align*}
So, we have $\mu^{\text{out}}=P\mu^{\text{in}}$ with the permutation matrix 
$P = \begin{pmatrix}
0 & 0 & 0 & 1 \\
1 & 0 & 0 & 0 \\
0 & 1 & 0 & 0 \\
0 & 0 & 1 & 0
\end{pmatrix}$.
\end{example}

\begin{figure}[h]
    \centering
\begin{tikzpicture}[
    scale=1.5,
    vertex/.style={circle, fill=black!80!black, inner sep=0pt, minimum size=5.5pt},
    directed/.style={
        thick, black!80!black,
        shorten >=3pt, shorten <=3pt,
        postaction={decorate, decoration={markings, mark=at position 0.55 with {\arrow{Stealth[scale=1.3]}}}}
    }
]

\node[vertex, label=above left:{\large \color{black!80!black}$v_1$}] (v1) at (-2, 1.5) {};
\node[vertex, label=above right:{\large \color{black!80!black}$v_2$}] (v2) at (2, 1.5) {};
\node[vertex, label=below:{\large \color{black!80!black}$v_3$}] (v3) at (0, -1.5) {};
\node[vertex, label=above:{\large \color{black!80!black}$v_4$}] (v4) at (0, 0) {};

\draw[directed] (v1) -- (v2); 
\draw[directed] (v2) -- (v3); 
\draw[directed] (v3) -- (v1); 

\draw[directed] (v1) -- (v4); 
\draw[directed] (v3) -- (v4); 

\draw[directed] (v4) to[bend left=20] (v2);  
\draw[directed] (v2) to[bend left=20] (v4);  

\end{tikzpicture}
    \caption{Modified Wheel Digraph}
    \label{fig:wheeldigraph}
\end{figure}

Let $\|\mu^{\text{out}}\|_{\ell_1}=\sum_{w \in V} \mu^{\text{out}}(w)$ and $\|\mu^{\text{in}}\|_{\ell_1}=\sum_{w \in V} \mu^{\text{in}}(w)$. From Proposition \ref{prop_permutationofcurv}, it is immediate that the out and in curvature have equal norm for digraphs satisfying $PDQ=D^T$. The following proposition generalizes the result.
\begin{proposition}
\label{prop_equal_curv}
    For a digraph $G$ with in- and out-curvature $\mu^{\text{out}}$ and $\mu^{\text{in}}$ satisfying Equations \eqref{eq_outcurv} and \eqref{eq_incurv}, it holds that $\|\mu^{\text{out}}\|_{\ell_1}=\|\mu^{\text{in}}\|_{\ell_1}$.
\end{proposition}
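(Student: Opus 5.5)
The plan is a double-counting argument: pair the two defining systems against each other, so that the transpose relation between $D$ and $D^T$ makes both total curvatures equal to one bilinear expression.

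Write $\mathbf{1}=\mathbf{1}_n$. By Equations \eqref{eq_outcurv} and \eqref{eq_incurv}, the vectors $\mu^{\text{out}}$ and $\mu^{\text{in}}$ satisfy
\[
D\mu^{\text{out}}=n\mathbf{1}, \qquad D^T\mu^{\text{in}}=n\mathbf{1}.
\]
Here $\|\cdot\|_{\ell_1}$ means the plain sum of entries, as defined just before the statement, so no absolute values are involved.

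The key step is to evaluate the scalar $(\mu^{\text{in}})^T D\mu^{\text{out}}$ in two ways.

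\textbf{Using the out-curvature equation.} Substituting $D\mu^{\text{out}}=n\mathbf{1}$ gives
\[
(\mu^{\text{in}})^T D\mu^{\text{out}} = n(\mu^{\text{in}})^T\mathbf{1} = n\|\mu^{\text{in}}\|_{\ell_1}.
\]

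\textbf{Using the in-curvature equation.} Transposing the scalar gives
\[
(\mu^{\text{in}})^T D\mu^{\text{out}} = (\mu^{\text{out}})^T D^T\mu^{\text{in}} = n(\mu^{\text{out}})^T\mathbf{1} = n\|\mu^{\text{out}}\|_{\ell_1}.
\]

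Equating the two expressions and dividing by $n>0$ yields $\|\mu^{\text{out}}\|_{\ell_1}=\|\mu^{\text{in}}\|_{\ell_1}$.

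Entrywise, the same computation is the double sum $\sum_{v,w}\mu^{\text{in}}(v)\,d(v,w)\,\mu^{\text{out}}(w)$ summed first over $w$ and then over $v$, or in the opposite order.

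There is essentially no obstacle. The only point to check is that the argument uses nothing beyond the two equations holding. It does not need invertibility of $D$, and it does not depend on which of the solutions is selected in Case (2) of Definition \ref{def_steinerbergercurv_directed}, since any pair of solutions works. Case (3), the Moore--Penrose case, is excluded by the standing assumption of this section.
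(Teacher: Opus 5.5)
Your proof is correct and is essentially the paper's argument. The paper writes $\langle \mu^{\text{out}},\mathbf{1}\rangle=\langle \mu^{\text{out}},\tfrac{1}{n}D^T\mu^{\text{in}}\rangle=\langle \tfrac{1}{n}D\mu^{\text{out}},\mu^{\text{in}}\rangle=\langle \mathbf{1},\mu^{\text{in}}\rangle$, which is exactly your two-way evaluation of $(\mu^{\text{in}})^T D\mu^{\text{out}}$, and your remark that only the two linear systems are needed (no invertibility, no dependence on the Case (2) selection) is accurate.
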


\begin{proof}
    For any $x,y\in \R^n$, let $\<x,y \>=x^Ty$ be the inner product of $x$ and $y$. Then, 
    \[
        \|\mu^{\text{out}}\|_{\ell_1}
        =\<\mu^{\text{out}}, \textbf{1} \>
        =\<\mu^{\text{out}},\frac{1}{n}D^T\mu^{\text{in}}\>
        =\<\frac{1}{n} D\mu^{\text{out}},\mu^{\text{in}}\>
        =\<\textbf{1},\mu^{\text{in}}\>=\|\mu^{\text{in}}\|_{\ell_1}.  \qedhere
    \]
\end{proof}


We shall call $\|\mu_G^{\text{out}}\|_{\ell_1}$ the total Steinerberger curvature of the digraph $G$. Next, we  establish several comparison-type results for this directed setting, including discrete analogues of the Bonnet--Myers theorem, Cheng's theorem, the Lichnerowicz theorem, and the reverse Bonnet--Myers inequality. Steinerberger \cite{steinerberger2023curvature} used the von Neumann Minimax theorem to prove these results in the undirected case. In our approach, the minimax theorem is used to derive the reverse Bonnet--Myers inequality, while the Bonnet--Myers, Cheng's and Lichnerowicz theorems are proved directly.


A general statement of von Neumann Minimax theorem is presented here.

\begin{theorem}\cite[von Neumann Minimax Theorem]{Neumann1928games}
\label{theo_vonneumann}
    Let 
    \[
    X=\left\{ x=(x_1, \ldots, x_n)\in \R_{\geq0}^n : \sum_i x_i=1 \right\}.
    \]
    For any matrix $A\in \R^{n \times n}$, there exists $\alpha\in \R$ such that 
    \begin{equation}
    \label{eq_minmax_alpha}
        \min_{x\in X} \max_{1\leq i\leq n} (Ax)_i =\alpha = \max_{x\in X} \min_{1\leq i\leq n} (A^Tx)_i.  
    \end{equation}
\end{theorem}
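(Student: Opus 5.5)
The identity is the standard minimax theorem for matrix games, and I would prove it by the usual two-inequality argument: an easy weak inequality plus a separating-hyperplane argument for the reverse. First I would reformulate both sides bilinearly. For $x\in X$ we have $\max_i (Ax)_i=\max_{y\in X} y^TAx$ and $\min_i (A^Tx)_i=\min_{z\in X} x^TAz$, since a linear functional on the simplex attains its extrema at the vertices $e_i$. Both $x\mapsto \max_i(Ax)_i$ and $x\mapsto\min_i(A^Tx)_i$ are continuous (piecewise linear) on the compact set $X$. Hence the outer min and max are attained. Set
\[
\alpha:=\min_{x\in X}\max_i (Ax)_i,\qquad \beta^*:=\max_{y\in X}\min_i (A^Ty)_i .
\]

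\emph{Weak inequality.} For any $x,y\in X$ we have
\[
\min_j (A^Ty)_j=\min_j\, y^TAe_j\le y^TAx\le \max_i\, e_i^TAx=\max_i(Ax)_i ,
\]
where the middle step uses that $y^TAx$ is a convex combination of the $y^TAe_j$, and likewise of the $e_i^TAx$. Taking the max over $y$ on the left and the min over $x$ on the right gives $\beta^*\le\alpha$.

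\emph{Reverse inequality.} Fix any $\beta<\alpha$ and consider the set
\[
C=\{Ax+z:\ x\in X,\ z\in\R^n_{\geq0}\}.
\]
$C$ is convex. It is also closed, being the sum of the compact set $AX$ and the closed cone $\R^n_{\ge0}$. The point $\beta\mathbf 1$ does not lie in $C$: if $\beta\mathbf 1=Ax+z$ with $z\ge0$, then $\max_i(Ax)_i\le\beta<\alpha$, contradicting the definition of $\alpha$. By strict separation of a point from a closed convex set, there is $y\neq0$ with
\[
y^T(Ax+z)>\beta\, y^T\mathbf 1\quad\text{for all } x\in X,\ z\ge0 .
\]
Letting one coordinate $z_k\to\infty$ forces $y_k\ge0$, so $y\ge 0$, and since $y\ne0$ we have $y^T\mathbf 1>0$. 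Normalising, we may take $y\in X$. Setting $z=0$ and $x=e_j$ then gives $(A^Ty)_j>\beta$ for every $j$. Hence $\beta^*\ge\min_j(A^Ty)_j>\beta$. Since $\beta<\alpha$ was arbitrary, $\beta^*\ge\alpha$, and combined with the weak inequality this proves \eqref{eq_minmax_alpha}.

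The main obstacle is the separation step. One must check that $C$ is closed, so that strict separation applies. One must also extract both the sign condition $y\ge0$ and the normalisation $y^T\mathbf 1>0$ from the separating functional; adding the cone $\R^n_{\ge0}$ to $AX$ is exactly what makes this work. An alternative I would keep in reserve is to derive the result from LP strong duality applied to the program $\min\{t: Ax\le t\mathbf 1,\ x\in X\}$ and its dual. The separation argument above is more self-contained.
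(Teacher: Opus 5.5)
The paper gives no proof of this statement. It is quoted from von Neumann's 1928 paper and used as a black box in Theorem~\ref{theorem_variation}.

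Your argument is correct and complete; it is the standard separating-hyperplane proof.
\begin{itemize}
\item The weak inequality follows from writing $y^TAx$ both as a convex combination of the $y^TAe_j$ and of the $e_i^TAx$.
\item For the reverse inequality, $C=AX+\R^n_{\geq 0}$ is convex and closed (compact plus closed), and $\beta\mathbf{1}\notin C$ whenever $\beta<\alpha$.
\item Adding the cone is exactly what forces the separating vector to satisfy $y\geq 0$ and $y\neq 0$. It can therefore be normalised into $X$, which yields $\min_j (A^Ty)_j>\beta$.
\end{itemize}
Compared with the citation, your route makes the result self-contained, modulo the separation theorem for a point and a closed convex set. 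It also works verbatim for rectangular matrices.

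It is worth noting that the paper's application needs only your weak inequality. If $\mu^{\text{in}},\mu^{\text{out}}\geq 0$, then $\mu^{\text{in}}/\|\mu^{\text{in}}\|_{\ell_1}$ and $\mu^{\text{out}}/\|\mu^{\text{out}}\|_{\ell_1}$ lie in $X$. Since $D^T\mu^{\text{in}}=D\mu^{\text{out}}=n\mathbf{1}$, this gives
\[
\frac{n}{\|\mu^{\text{in}}\|_{\ell_1}}\leq \max_{x\in X}\min_i (D^Tx)_i\leq \min_{x\in X}\max_i (Dx)_i\leq \frac{n}{\|\mu^{\text{out}}\|_{\ell_1}},
\]
where the middle step is your weak inequality. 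Proposition~\ref{prop_equal_curv} then collapses this chain to equalities. So the separation step is what makes the theorem true for an arbitrary matrix $A$. In the setting of Theorem~\ref{theorem_variation}, however, explicit optimal strategies are already available, and that step is never actually exercised.
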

A variational theorem of von Neumann Minimax Theorem is readily obtained as follows.
\begin{theorem}
\label{theorem_variation}
    Suppose $G=(V,E)$ admits nonnegative in- and out-curvature $\mu^{\text{in}}, \mu^{\text{out}}$. For any probability measure $\eta :V \rightarrow [0,1]$, we have 
     \begin{equation}
    \label{ineq_minmax_var}    
      \min_{v\in V} \sum_{w\in V} d(w,v)\eta(w) \leq \dfrac{n}{\|\mu^{\text{in}}\|_{\ell_1}} \leq \max_{v\in V} \sum_{w\in V} d(v,w)\eta(w).  
    \end{equation}
\end{theorem}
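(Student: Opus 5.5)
Both inequalities follow by pairing $\eta$ against a normalized curvature measure and averaging, in the same spirit as the proof of Proposition \ref{prop_equal_curv}. Write $S=\|\mu^{\text{in}}\|_{\ell_1}=\|\mu^{\text{out}}\|_{\ell_1}$, where the equality is Proposition \ref{prop_equal_curv}. Since both curvatures are nonnegative and not identically zero (otherwise $D\mu^{\text{out}}=n\mathbf{1}$ fails), we have $S>0$. This lets us form the probability measures $\nu^{\text{in}}=\mu^{\text{in}}/S$ and $\nu^{\text{out}}=\mu^{\text{out}}/S$.

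\emph{Left inequality.} Apply \eqref{eq_incurv}, which says $\sum_{w} d(w,v)\mu^{\text{in}}(w)=n$ for every $v$. Average the quantity $\sum_w d(w,v)\eta(w)$ over $v$ with respect to the probability measure $\nu^{\text{out}}$:
\[
\min_{v}\sum_w d(w,v)\eta(w)\le \sum_v \nu^{\text{out}}(v)\sum_w d(w,v)\eta(w)=\frac{1}{S}\sum_w \eta(w)\sum_v d(w,v)\mu^{\text{out}}(v).
\]
By \eqref{eq_outcurv}, the inner sum equals $n$ for each $w$. Hence the right-hand side is $\frac{n}{S}\sum_w\eta(w)=n/S$. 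The averaging step is valid because $\nu^{\text{out}}\ge 0$ and sums to $1$, so the minimum is at most the average.

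\emph{Right inequality.} Symmetrically, average $\sum_w d(v,w)\eta(w)$ over $v$ with respect to $\nu^{\text{in}}$:
\[
\max_v\sum_w d(v,w)\eta(w)\ge \frac1S\sum_w\eta(w)\sum_v d(v,w)\mu^{\text{in}}(v)=\frac{n}{S}.
\]
Here the inner sum equals $n$ by \eqref{eq_incurv}, with the summation variable relabeled.

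The only point needing care is the index bookkeeping. The minimum term involves $d(w,v)$, so it must be averaged against $\mu^{\text{out}}$; the maximum term involves $d(v,w)$, so it must be averaged against $\mu^{\text{in}}$. Together with the identity $\|\mu^{\text{in}}\|_{\ell_1}=\|\mu^{\text{out}}\|_{\ell_1}$, this lets both bounds be stated with $\|\mu^{\text{in}}\|_{\ell_1}$. No appeal to Theorem \ref{theo_vonneumann} is needed, although the inequality can be read as the weak-duality half of it.
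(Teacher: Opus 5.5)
Your proof is correct, but it takes a different route from the paper's. The paper invokes von Neumann's minimax theorem (Theorem \ref{theo_vonneumann}) with $A=D$. It plugs $x=\mu^{\text{out}}/\|\mu^{\text{out}}\|_{\ell_1}$ into the min--max side to get $\alpha\le n/\|\mu^{\text{out}}\|_{\ell_1}$, and $x=\mu^{\text{in}}/\|\mu^{\text{in}}\|_{\ell_1}$ into the max--min side to get $\alpha\ge n/\|\mu^{\text{in}}\|_{\ell_1}$. It then uses Proposition \ref{prop_equal_curv} to conclude $\alpha=n/\|\mu^{\text{in}}\|_{\ell_1}$, and the stated inequalities follow, left implicit, from $\min_v (D^T\eta)_v\le\alpha\le\max_v (D\eta)_v$.

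You bypass the minimax theorem entirely. Pairing $D^T\eta$ against $\nu^{\text{out}}$ and $D\eta$ against $\nu^{\text{in}}$ and swapping sums is just the identity $\langle \nu^{\text{out}},D^T\eta\rangle=\langle D\nu^{\text{out}},\eta\rangle=n/S$, which is weak duality with the curvature vectors as explicit certificates.

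The paper's route buys the game-theoretic framing: $n/\|\mu^{\text{in}}\|_{\ell_1}$ is the value of the zero-sum game with payoff matrix $D$, and the normalized curvatures are optimal strategies. Your route buys a self-contained two-line argument that shows exactly where each hypothesis enters. Nonnegativity is needed to make $\nu^{\text{in}},\nu^{\text{out}}$ probability measures, and Proposition \ref{prop_equal_curv} is needed only for the left inequality.

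Your argument also recovers the game-value statement. Taking $\eta=\nu^{\text{in}}$ in the left inequality and $\eta=\nu^{\text{out}}$ in the right gives equality, so the nontrivial direction of the minimax theorem is not actually needed here.
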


\begin{proof}
     In Theorem \ref{theo_vonneumann}, consider $A=D$ and $x=\dfrac{\mu^{\text{out}}}{\|\mu^{\text{out}}\|_{\ell_1}}$. From the left equality of \eqref{eq_minmax_alpha}, we have 
    \begin{align*}
    \alpha &\leq \max_{v \in V} \left(\sum_{w\in V} d(v,w)\dfrac{\mu^{\text{out}}(w) }{\|\mu^{\text{out}}\|_{\ell_1}} \right) = \frac{n}{\|\mu^{\text{out}}\|_{\ell_1}}.
    \end{align*}
    Now by choosing $x$ to be $\dfrac{\mu^{\text{in}}}{\|\mu^{\text{in}}\|_{\ell_1}}$, the right equality of \eqref{eq_minmax_alpha} gives  $\alpha \geq  \dfrac{n}{\|\mu^{\text{in}}\|_{\ell_1}}$.

    Since $\|\mu^{\text{out}}\|_{\ell_1}=\|\mu^{\text{in}}\|_{\ell_1}$ by Proposition \ref{prop_equal_curv}, we conclude that $\alpha = \dfrac{n}{\|\mu^{\text{in}}\|_{\ell_1}}$.
\end{proof}

Using Theorem \ref{theorem_variation}, we prove an analogue of Reverse Bonnet-Myers Inequality for digraphs. We follow the argument in \cite[Theorem 2]{steinerberger2023curvature}.

\begin{theorem}[Reverse Bonnet-Myers Inequality]
\label{theo_reverse_bonnet}
        Let $G=(V,E)$ be a digraph admitting nonnegative in- and out-curvature $\mu^{\text{in}}, \mu^{\text{out}}$. It holds that
        \begin{equation}
        \label{eq:rev_BM_ineq}
        \overrightarrow{\diam}(G) \geq \dfrac{n^2}{(n-1)} \dfrac{1}{\|\mu^{\text{in}}\|_{\ell_1}},
        \end{equation}
        with equality if and only if $G=\overleftrightarrow{K_n}$.
\end{theorem}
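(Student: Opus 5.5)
The plan is to apply Theorem \ref{theorem_variation} with the uniform probability measure $\eta \equiv \frac{1}{n}$ and to bound the resulting directed distance sums by the diameter. Choosing $\eta(w)=1/n$, the right inequality of \eqref{ineq_minmax_var} gives
\[
\frac{n}{\|\mu^{\text{in}}\|_{\ell_1}} \leq \max_{v\in V} \frac{1}{n}\sum_{w\in V} d(v,w).
\]
For each fixed $v$, the term $w=v$ contributes $d(v,v)=0$. Each of the remaining $n-1$ terms satisfies $d(v,w)\leq \overrightarrow{\diam}(G)$ by Definition \ref{def2.3}. Hence $\sum_{w} d(v,w) \leq (n-1)\overrightarrow{\diam}(G)$, so
\[
\frac{n}{\|\mu^{\text{in}}\|_{\ell_1}} \leq \frac{n-1}{n}\,\overrightarrow{\diam}(G).
\]
Rearranging gives \eqref{eq:rev_BM_ineq}. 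The hypotheses of nonnegativity and solvability are exactly those needed to invoke Theorem \ref{theorem_variation}.

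For the equality case, first suppose $G=\overleftrightarrow{K_n}$. Then $D=J_n-I_n$ and $\overrightarrow{\diam}(G)=1$. The in-curvature is constant, equal to $n/(n-1)$ at each vertex, so $\|\mu^{\text{in}}\|_{\ell_1}=n^2/(n-1)$ and equality holds. Conversely, suppose equality holds. Then both inequalities in the chain above are tight. In particular, for a maximizing vertex $v_0$ we have $d(v_0,w)=\overrightarrow{\diam}(G)$ for every $w\neq v_0$. This alone does not force $\diam=1$, so I will extract more information.

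The extra information comes from the proof of Theorem \ref{theorem_variation}: the common value $\alpha = n/\|\mu^{\text{in}}\|_{\ell_1}$ satisfies $\alpha \leq \max_v (D\eta)_v$ for every probability vector $\eta$. Equality with the uniform $\eta$ means the uniform vector is an optimal strategy in the min-max problem. I then plan to combine this with the left inequality of \eqref{ineq_minmax_var} applied to the uniform vector, which gives $\min_v \frac1n\sum_w d(w,v) \leq \alpha$. A cleaner route is to pair with $\mu^{\text{in}}$ directly. Since $D^T\mu^{\text{in}}=n\mathbf 1$, summing gives
\[
\sum_{v,w} d(w,v)\mu^{\text{in}}(w) = n^2.
\]
Because $\mu^{\text{in}}\ge 0$, each row sum satisfies $\sum_v d(w,v)\leq (n-1)\diam$, and with $\diam = n^2/((n-1)\|\mu^{\text{in}}\|_{\ell_1})$ we get
\[
n^2 = \sum_w \mu^{\text{in}}(w)\sum_v d(w,v) \leq \|\mu^{\text{in}}\|_{\ell_1}(n-1)\diam = n^2.
\]
So for every $w$ in the support of $\mu^{\text{in}}$, all distances $d(w,v)$ with $v\neq w$ equal $\diam$.

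The main obstacle is upgrading this to $\diam=1$. Take $w$ in the support of $\mu^{\text{in}}$, which is nonempty since $\|\mu^{\text{in}}\|_{\ell_1}>0$. If $\diam\ge 2$, then any out-neighbour $u$ of $w$, which exists by strong connectivity, satisfies $d(w,u)=1<\diam$, a contradiction. Hence $\diam=1$, which means every ordered pair of distinct vertices is joined by an arc, i.e. $G=\overleftrightarrow{K_n}$. This actually makes the equality argument short. The only delicate point is justifying $\|\mu^{\text{in}}\|_{\ell_1}>0$, which follows from $D^T\mu^{\text{in}}=n\mathbf 1\neq 0$.
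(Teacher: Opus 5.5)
Your proof is correct. The inequality itself is obtained exactly as in the paper: apply the uniform measure in Theorem~\ref{theorem_variation}, then use $\sum_{w} d(v_0,w)\le (n-1)\overrightarrow{\diam}(G)$.

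For the equality case you take a detour, and your stated reason for it is mistaken. The conclusion $d(v_0,w)=\overrightarrow{\diam}(G)$ for all $w\neq v_0$ \emph{does} force $\overrightarrow{\diam}(G)=1$. The out-neighbour argument you use at the end works verbatim at $v_0$, and this is exactly what the paper does.

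The detour is nonetheless valid, and it gives more than you used it for. The identity
\[
n^2=\mathbf{1}^T D^T\mu^{\text{in}}=\sum_{w}\mu^{\text{in}}(w)\sum_{v}d(w,v)\le (n-1)\,\overrightarrow{\diam}(G)\,\|\mu^{\text{in}}\|_{\ell_1}
\]
proves \eqref{eq:rev_BM_ineq} outright. It needs no minimax theorem, no Theorem~\ref{theorem_variation} and no Proposition~\ref{prop_equal_curv}. It uses only $\mu^{\text{in}}\ge 0$, so the nonnegativity hypothesis on $\mu^{\text{out}}$ becomes superfluous. Its equality analysis then gives the rigidity statement, as you showed.

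So the paper's route keeps the result inside the game-theoretic framework it uses for the neighbouring theorems. Your identity gives a shorter, more elementary proof of both halves under a weaker hypothesis.
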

\begin{proof}
     Consider the uniform measure $\eta \equiv \frac{1}{n}$. By Theorem \ref{theorem_variation}, there exists $v_0$ such that $\dfrac{n}{\|\mu^{\text{in}}\|_{\ell_1}} \leq \dfrac{1}{n}\sum_{w\in V}d(v_0,w)$. Since $\sum_{w\in V}d(v_0,w) \leq (n-1)\overrightarrow{\diam}(G),$ we immediately obtain \eqref{eq:rev_BM_ineq}.
    Suppose the equality holds. Then, it implies that $d(v_0,w)=\overrightarrow{\diam}(G)$ for every $w\neq v_0$. If $w$ is an out-neighborhood of $v_0$, then $\overrightarrow{\diam}(G)=d(v_0,w)=1$. This implies that $G$ is $\overleftrightarrow{K_n}$. The converse direction is easy to check.
\end{proof}

In the following, we consider a \textit{generalized} reverse Bonnet-Myers Inequality formulated in terms of out-radius.

\begin{theorem}
\label{theo:reversebonnet_outradius}
    Let $G=(V,E)$ be a digraph. Suppose $G$ admits nonnegative out-curvature $\mu^{\text{out}}$. Then,
    \begin{equation} \label{eq:generalized_rev_BM_ineq}
    \mathrm{rad}^+(G) \geq \dfrac{n}{\|\mu^{\text{out}}\|_{\ell_1}}.
    \end{equation}
    Let $v_0$ be a vertex such that $\text{dist}(v_0, V)=\mathrm{rad}^+(G)$. If in addition $\mu^{\text{out}}(v_0) \geq \dfrac{\|\mu^{\text{out}}\|_{\ell_1}}{n}$ holds, then
    \begin{equation*}
       \mathrm{rad}^+(G) \geq \dfrac{n^2}{n-1} \dfrac{1}{\|\mu^{\text{out}}\|_{\ell_1}},
    \end{equation*}
     with equality if only if $d(v_0,w)=1$ for every $w\neq v_0$ (Note that this implies that $\mathrm{rad}^+(G)=1$ and $\|\mu^{\text{in}}\|_{\ell_1}=\frac{n^2}{n-1}$).
\end{theorem}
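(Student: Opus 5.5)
The plan is to exploit the defining identity \eqref{eq_outcurv} at the single vertex $v_0$ realizing the out-radius, rather than passing through the minimax machinery. Fix $v_0$ with $\text{dist}(v_0,V)=\mathrm{rad}^+(G)$, so that $d(v_0,w)\le \mathrm{rad}^+(G)$ for every $w\in V$. Since $\mu^{\text{out}}\ge 0$, evaluating \eqref{eq_outcurv} at $v=v_0$ gives
\[
n=\sum_{w\in V} d(v_0,w)\mu^{\text{out}}(w)\le \mathrm{rad}^+(G)\sum_{w\in V}\mu^{\text{out}}(w)=\mathrm{rad}^+(G)\,\|\mu^{\text{out}}\|_{\ell_1},
\]
which is exactly \eqref{eq:generalized_rev_BM_ineq}. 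Note that $\|\mu^{\text{out}}\|_{\ell_1}>0$, since otherwise $\mu^{\text{out}}\equiv 0$ and the left side would vanish.

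For the sharper bound, I would refine the same computation by discarding the term $w=v_0$, which contributes nothing because $d(v_0,v_0)=0$. This gives
\[
n\le \mathrm{rad}^+(G)\sum_{w\neq v_0}\mu^{\text{out}}(w)=\mathrm{rad}^+(G)\bigl(\|\mu^{\text{out}}\|_{\ell_1}-\mu^{\text{out}}(v_0)\bigr).
\]
The extra hypothesis $\mu^{\text{out}}(v_0)\ge \|\mu^{\text{out}}\|_{\ell_1}/n$ then yields $n\le \mathrm{rad}^+(G)\,\frac{n-1}{n}\|\mu^{\text{out}}\|_{\ell_1}$, which rearranges to the claimed inequality.

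The main point needing care is the equality statement. Equality forces equality in both steps. First, $\mu^{\text{out}}(v_0)=\|\mu^{\text{out}}\|_{\ell_1}/n$. Second, $d(v_0,w)\mu^{\text{out}}(w)=\mathrm{rad}^+(G)\mu^{\text{out}}(w)$ for each $w\ne v_0$. The second condition only gives $d(v_0,w)=\mathrm{rad}^+(G)$ for those $w$ with $\mu^{\text{out}}(w)>0$, so I must deal with vertices of zero curvature.

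My plan there is to mirror the argument of Theorem \ref{theo_reverse_bonnet}. Let $w$ be an out-neighbor of $v_0$ carrying positive curvature; then $\mathrm{rad}^+(G)=d(v_0,w)=1$, which forces $d(v_0,w)=1$ for all $w\ne v_0$. If instead every positively curved $w\neq v_0$ lay at a common distance $r>1$, I would try to use \eqref{eq_outcurv} at an out-neighbor of $v_0$ to reach a contradiction. Failing that, I would interpret the equality clause as the condition obtained when all $d(v_0,w)$ equal the radius, which is how Theorem \ref{theo_reverse_bonnet} is phrased.

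The converse is direct. If $d(v_0,w)=1$ for all $w\ne v_0$, then \eqref{eq_outcurv} at $v_0$ gives $\|\mu^{\text{out}}\|_{\ell_1}-\mu^{\text{out}}(v_0)=n$. Combining this with the equality case of the hypothesis on $\mu^{\text{out}}(v_0)$ gives $\|\mu^{\text{out}}\|_{\ell_1}=n^2/(n-1)$, and Proposition \ref{prop_equal_curv} transfers this value to $\|\mu^{\text{in}}\|_{\ell_1}$, as noted in the statement.
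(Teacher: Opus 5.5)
Your proof of the two inequalities is exactly the paper's argument: evaluate \eqref{eq_outcurv} at $v_0$, bound $d(v_0,w)\le \mathrm{rad}^+(G)$, and drop the term $w=v_0$. That part is fine.

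\textbf{The \emph{only if} half of the equality clause.} This is where your gap lies, and it cannot be closed, because this half of the statement is false. Tracing back the inequalities gives only two conditions: $\mu^{\text{out}}(v_0)=\|\mu^{\text{out}}\|_{\ell_1}/n$, and $d(v_0,w)=\mathrm{rad}^+(G)$ for those $w\neq v_0$ with $\mu^{\text{out}}(w)>0$. These two conditions are in fact equivalent to equality. The situation you worried about, where every out-neighbour of $v_0$ has zero curvature, really occurs.

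Take $n=7$ with vertices ordered $v_0,u_1,u_2,a_1,a_2,b_1,b_2$ and arcs $v_0\to u_1,u_2$; $u_1\to v_0,a_1,a_2$; $u_2\to v_0,b_1,b_2$; $a_1\to v_0,u_1$; $b_1\to v_0,u_2$; $a_2\to v_0,u_1,b_2$; $b_2\to v_0,u_2,a_2$. Then
\[
D=\begin{pmatrix}
0&1&1&2&2&2&2\\
1&0&2&1&1&3&2\\
1&2&0&3&2&1&1\\
1&1&2&0&2&3&3\\
1&1&2&2&0&3&1\\
1&2&1&3&3&0&2\\
1&2&1&3&1&2&0
\end{pmatrix},
\qquad D\,(1,0,0,2,1,2,1)^T=12\cdot\mathbf{1}.
\]
$D$ is invertible: split along the automorphism $u_1\leftrightarrow u_2$, $a_i\leftrightarrow b_i$, and both blocks are nonsingular. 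Hence $\mu^{\text{out}}=\frac{7}{12}(1,0,0,2,1,2,1)^T\ge 0$.

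In this digraph:
\begin{itemize}
\item $v_0$ is the only vertex of out-eccentricity $\mathrm{rad}^+(G)=2$; all others have out-eccentricity $3$.
\item $\mu^{\text{out}}(v_0)=\frac{7}{12}=\|\mu^{\text{out}}\|_{\ell_1}/7$, so the extra hypothesis holds.
\item $\frac{n^2}{n-1}\cdot\frac{1}{\|\mu^{\text{out}}\|_{\ell_1}}=\frac{49}{6}\cdot\frac{12}{49}=2$, so equality holds.
\end{itemize}
Yet $d(v_0,a_1)=2$. So no contradiction can be extracted at an out-neighbour of $v_0$. Your fallback reading also fails, since here $d(v_0,u_1)=1\neq\mathrm{rad}^+(G)$.

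The paper's own proof stops after the inequalities and never treats the equality clause, so the defect lies in the statement itself. The correct equality condition is the support condition above.

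\textbf{The converse.} Separately, your converse uses $\mu^{\text{out}}(v_0)=\|\mu^{\text{out}}\|_{\ell_1}/n$. The assumption that $d(v_0,w)=1$ for all $w\ne v_0$ does not give you this directly, but it does force it:
\begin{itemize}
\item The equation at $v_0$ gives $\sum_{w\neq v_0}\mu^{\text{out}}(w)=n$.
\item The equation at $u\neq v_0$, using $d(u,w)\ge 1$ for $w\neq u$ and $\mu^{\text{out}}\ge 0$, gives $n\ge d(u,v_0)\mu^{\text{out}}(v_0)+n-\mu^{\text{out}}(u)$. Hence $\mu^{\text{out}}(u)\ge\mu^{\text{out}}(v_0)$.
\item Summing over $u\neq v_0$ gives $(n-1)\mu^{\text{out}}(v_0)\le n$, i.e.\ $\mu^{\text{out}}(v_0)\le\|\mu^{\text{out}}\|_{\ell_1}/n$.
\end{itemize}
So the hypothesis holds with equality, $\|\mu^{\text{out}}\|_{\ell_1}=n^2/(n-1)$, and the bound is attained. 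With this step added, the \emph{if} direction and the parenthetical note are correct.
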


\begin{proof}
    From the definition of $\mu^{\text{out}}$, we have
    \begin{equation}\label{eq:thm4.11_ineq1}
    n=\sum_{w\in V} d(v_0, w) \mu^{\text{out}}(w) 
         \leq d(v_0, V) \sum_{w\in V-\{v_0\}} \mu^{\text{out}}(w) 
         =\mathrm{rad}^+(G) (\|\mu^{\text{out}}\|_{\ell_1}-\mu^{\text{out}}(v_0)).
    \end{equation}
    Since $\mu^{\text{out}}$ is assumed to be nonnegative, we must have $\mu^{\text{out}}(v_0)\geq 0$. Then, \eqref{eq:generalized_rev_BM_ineq} follows immediately from \eqref{eq:thm4.11_ineq1}.
    If, in addition, we assume that $\mu^{\text{out}}(v_0) \geq \dfrac{\|\mu^{\text{out}}\|_{\ell_1}}{n}$, then  \eqref{eq:thm4.11_ineq1} implies that $\mathrm{rad}^+(G) \geq \dfrac{n^2}{n-1} \dfrac{1}{\|\mu^{\text{out}}\|_{\ell_1}}$.
\end{proof}

\begin{theorem}
\label{theo:inradius_ub}
    Let $G=(V,E)$ be a digraph with nonnegative out-curvature $\mu^{\text{out}}$. Then, the inequality 
    \begin{equation}\label{eq:in_rad_ineq}
    \mathrm{rad}^-(G) \leq \dfrac{n^2}{\|\mu^{\text{in}}\|_{\ell_1}} -(n-2)
    \end{equation}
    holds, with equality only if there exist $v_0,w_0 \in V$ such that $d(w,v_0)=1$ for every $w \neq v_0, w_0$ and $d(w_0,v_0)=\mathrm{rad}^{-}(G)$. 
\end{theorem}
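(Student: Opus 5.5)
The plan is a direct averaging argument based on the defining equations \eqref{eq_outcurv}; the minimax theorem is not needed. This matters because Theorem \ref{theorem_variation} assumes both curvatures are nonnegative, whereas here only $\mu^{\text{out}}\geq 0$ is given. At the end I convert $\|\mu^{\text{out}}\|_{\ell_1}$ into $\|\mu^{\text{in}}\|_{\ell_1}$ using Proposition \ref{prop_equal_curv}.

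First, I sum \eqref{eq_outcurv} over all $v\in V$ and swap the order of summation:
\[
n^2=\sum_{v\in V}\sum_{w\in V} d(v,w)\mu^{\text{out}}(w)=\sum_{w\in V}\mu^{\text{out}}(w)\,S(w),\qquad S(w):=\sum_{v\in V} d(v,w).
\]
Since $\mu^{\text{out}}\geq 0$ and $\|\mu^{\text{out}}\|_{\ell_1}>0$ (it cannot vanish, since $n^2>0$), the right-hand side is a nonnegative weighted combination of the $S(w)$ with total weight $\|\mu^{\text{out}}\|_{\ell_1}$. It is therefore at least $\|\mu^{\text{out}}\|_{\ell_1}\min_w S(w)$. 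Choosing $v_0$ with $S(v_0)=\min_w S(w)$ gives $S(v_0)\leq n^2/\|\mu^{\text{out}}\|_{\ell_1}$.

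Second, I bound $S(v_0)$ from below. Pick $w_0$ with $d(w_0,v_0)=\text{dist}(V,v_0)\geq \mathrm{rad}^-(G)$. Every other $w\neq v_0,w_0$ satisfies $d(w,v_0)\geq 1$ because $G$ is simple, and $d(v_0,v_0)=0$. Hence
\[
S(v_0)\geq \mathrm{rad}^-(G)+(n-2).
\]
Combining this with the first step and with $\|\mu^{\text{out}}\|_{\ell_1}=\|\mu^{\text{in}}\|_{\ell_1}$ from Proposition \ref{prop_equal_curv} yields \eqref{eq:in_rad_ineq}.

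For the equality statement, I trace back through the chain of inequalities. Equality in \eqref{eq:in_rad_ineq} forces equality in the second step at the chosen $v_0$. That means $d(w,v_0)=1$ for all $w\neq v_0,w_0$, and $d(w_0,v_0)=\text{dist}(V,v_0)=\mathrm{rad}^-(G)$, which is exactly the stated necessary condition.

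The only delicate point is the first step. There I must use nonnegativity of $\mu^{\text{out}}$ to pass from the weighted average to the minimum. I also need the positivity of $\|\mu^{\text{out}}\|_{\ell_1}$ to divide, and this follows from the displayed identity.
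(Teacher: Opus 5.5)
Your proof is correct. From the choice of $v_0$ onward it coincides with the paper's argument: you pick $w_0$ realizing $\text{dist}(V,v_0)$, bound the column sum $\sum_{w} d(w,v_0)$ below by $\mathrm{rad}^-(G)+(n-2)$, and trace back the equalities.

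The genuine difference is how you obtain a vertex $v_0$ with $\frac{1}{n}\sum_{w} d(w,v_0)\le n/\|\mu^{\text{in}}\|_{\ell_1}$.
\begin{itemize}
\item The paper takes this from Theorem \ref{theorem_variation}, the von Neumann minimax variational inequality, with $\eta\equiv 1/n$.
\item You get it by summing \eqref{eq_outcurv} over $v$, i.e.\ by pairing $D^T\mathbf{1}$ against $\mu^{\text{out}}$ and bounding the weighted average below by its minimum.
\end{itemize}

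Your route is more elementary, and it matches the hypotheses exactly. Theorem \ref{theorem_variation} is stated assuming both $\mu^{\text{in}}\ge 0$ and $\mu^{\text{out}}\ge 0$, whereas Theorem \ref{theo:inradius_ub} assumes only $\mu^{\text{out}}\ge 0$. In fact, the left inequality of \eqref{ineq_minmax_var} is exactly your computation, valid for any probability $\eta$:
\[
\|\mu^{\text{out}}\|_{\ell_1}\min_v (D^T\eta)_v\le \langle D^T\eta,\mu^{\text{out}}\rangle=\langle \eta, D\mu^{\text{out}}\rangle=n .
\]
This uses only $\mu^{\text{out}}\ge 0$, so the paper's citation is harmless. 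Your argument makes that explicit and removes the need for the minimax theorem. You also justify $\|\mu^{\text{out}}\|_{\ell_1}>0$, which the paper uses tacitly when it divides.

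Two minor points:
\begin{itemize}
\item $d(w,v_0)\ge 1$ for $w\neq v_0$ holds simply because distinct vertices are at positive distance, not because $G$ is simple.
\item $w_0\neq v_0$ requires $n\ge 2$; this is implicit in the setting.
\end{itemize}
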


\begin{proof}
    In Theorem \ref{theorem_variation}, consider the uniform measure $\eta \equiv \frac{1}{n}$. Then there exists $v_0\in V$ such that 
    \begin{equation*}
        \frac{1}{n}\sum_{w\in V} d(w,v_0) \leq \dfrac{n}{\|\mu^{\text{in}}\|_{\ell_1}}.
    \end{equation*}
    Choose $w_0 \in V$ such that $\text{dist}(V,v_0)=d(w_0,v_0)$. Then, 
        \[
        \dfrac{n}{\|\mu^{\text{in}}\|_{\ell_1}} 
        \geq \frac{1}{n}d(w_0,v_0) + \frac{1}{n} \sum_{w\in V-\{v_0, w_0\}} d(w,v_0) 
        \geq \frac{1}{n}\text{dist}(V,v_0) + \frac{1}{n} (n-2). 
        \]
        This implies that
        \[
        \text{dist}(V,v_0) \leq \dfrac{n^2}{ \|\mu^{\text{in}}\|_{\ell_1}} -(n-2).
        \]
    By Definition \ref{def2.3}(ii), the Inequality \eqref{eq:in_rad_ineq} follows immediately.
    Suppose the equality holds, i.e. $\mathrm{rad}^-(G) = \dfrac{n^2}{\|\mu^{\text{in}}\|}_{\ell_1} -(n-2)$. Tracing back the inequalities, this implies that $\mathrm{rad}^{-}(G)=\text{dist}(V,v_0)=d(w_0,v_0)$ and $d(w,v_0)=1$ for every $w \neq v_0,w_0$.
\end{proof}

Following the idea in \cite{cushing2025note_steinerbeger}, we present a direct proof of Discrete Bonnet-Myers and Cheng's theorem for digraphs. 
\begin{theorem}[Discrete Bonnet-Myers Theorem]
\label{theo:bonnet-myers}
    Let $G=(V,E)$ be a digraph. Suppose $G$ has in- and out-curvatures bounded below by $K>0$, i.e. $\min_v \mu^{\text{out}}(v),$ $\min_v \mu^{\text{in}}(v) \geq K>0$. Then, it holds that
    \begin{equation}
        \overrightarrow{\diam (G)} \leq \dfrac{3n}{\|\mu^{\text{in}}\|_{\ell_1}}-\dfrac{2(\mu^{\text{in}})^TA(\mu^{\text{out}})}{\|\mu^{\text{in}}\|_{\ell_1}^2} \leq \frac{3}{K},
    \end{equation}
    where $A=\frac{1}{2}(D-D^T)$ is the skew-symmetric part of $D$. \vspace{7pt}

    (Cheng's Theorem) If $\overrightarrow{\diam (G)}=\dfrac{3}{K}$, then $G$ has constant in- and out-curvature $K\mathbf{1_n}$. 
\end{theorem}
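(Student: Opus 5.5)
The plan is to bound $\overrightarrow{\diam}(G)$ by a three-step triangle inequality, averaged with respect to the out- and in-curvatures. By hypothesis both measures are positive. Write $S:=\|\mu^{\text{in}}\|_{\ell_1}=\|\mu^{\text{out}}\|_{\ell_1}$, which are equal by Proposition \ref{prop_equal_curv}. Fix $x,y\in V$ with $d(x,y)=\overrightarrow{\diam}(G)$. For all $u,v\in V$ the directed triangle inequality gives
\[
d(x,y)\le d(x,u)+d(u,v)+d(v,y).
\]
Multiply by $\mu^{\text{out}}(u)\mu^{\text{in}}(v)\ge 0$ and sum over $u,v$. The outer terms are controlled by \eqref{eq_outcurv} and \eqref{eq_incurv}: $\sum_u d(x,u)\mu^{\text{out}}(u)=n$ and $\sum_v d(v,y)\mu^{\text{in}}(v)=n$. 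This gives
\[
S^2\,\overrightarrow{\diam}(G)\le 2nS+(\mu^{\text{out}})^TD\,\mu^{\text{in}}.
\]
Since $(\mu^{\text{out}})^TD\mu^{\text{in}}=(\mu^{\text{in}})^TD^T\mu^{\text{out}}$ and $D^T=D-2A$, we get $(\mu^{\text{out}})^TD\mu^{\text{in}}=(\mu^{\text{in}})^TD\mu^{\text{out}}-2(\mu^{\text{in}})^TA\mu^{\text{out}}=nS-2(\mu^{\text{in}})^TA\mu^{\text{out}}$. Dividing by $S^2$ yields the first inequality $\overrightarrow{\diam}(G)\le 3n/S-2(\mu^{\text{in}})^TA\mu^{\text{out}}/S^2$.

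For the second inequality, the easy part is $S\ge nK$, since every entry of $\mu^{\text{in}}$ is at least $K$. Hence $2n/S\le 2/K$, and it remains to show that the normalized middle term $(\mu^{\text{out}})^TD\mu^{\text{in}}/S^2$ is at most $1/K$. This is the step I expect to be the main obstacle, because $D\mu^{\text{in}}$ and $D^T\mu^{\text{out}}$ are not controlled by the defining equations. My first attempt is to split $\mu^{\text{in}}=K\mathbf 1+\nu^{\text{in}}$ and $\mu^{\text{out}}=K\mathbf 1+\nu^{\text{out}}$ with $\nu^{\text{in}},\nu^{\text{out}}\ge 0$, and to evaluate the resulting pieces with the linear systems. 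For example, $\mathbf 1^TD\mu^{\text{in}}$ can be rewritten through $D^T$ and compared with $\sum_{u}(D\mu^{\text{out}})(u)=n^2$. The row-sum bound $\sum_v d(u,v)\le n/K$, which follows from $\sum_v d(u,v)\mu^{\text{out}}(v)=n$ and $\mu^{\text{out}}\ge K$, should then bound the pure-$K$ pieces. If this does not close, the fallback is to choose the weights in the triangle-inequality averaging asymmetrically, for instance using $K\mathbf 1$ in the middle slot. That change keeps the outer terms equal to $n$, while the middle term becomes a row-sum of $D$ that is bounded by $n/K$.

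For Cheng's part, assume $\overrightarrow{\diam}(G)=3/K$. Then every inequality in the chain must be an equality. In particular $S=nK$ is forced. Since $\mu^{\text{in}}\ge K\mathbf 1$ has total mass $nK$, this gives $\mu^{\text{in}}=K\mathbf 1_n$, and by the equality of total masses the same argument gives $\mu^{\text{out}}=K\mathbf 1_n$.
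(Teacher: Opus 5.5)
Your first inequality is correct and is exactly the paper's argument: you average $d(x,y)\le d(x,u)+d(u,v)+d(v,y)$ against $\mu^{\text{out}}(u)\mu^{\text{in}}(v)$, then rewrite $(\mu^{\text{out}})^TD\mu^{\text{in}}=nS-2(\mu^{\text{in}})^TA\mu^{\text{out}}$. Your Cheng step also matches the paper, conditional on the chain: equality forces $S=nK$, hence $\mu^{\text{in}}=\mu^{\text{out}}=K\mathbf 1_n$.

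The genuine gap is the second inequality, which you never prove. Your splitting only disposes of the pieces carrying $K\mathbf 1$. The remaining piece $(\mu^{\text{out}})^TD\nu^{\text{in}}$ puts the out-measure on the source side of $D$ and the in-measure on the target side. That is exactly the opposite of what \eqref{eq_outcurv} and \eqref{eq_incurv} control, and no row- or column-sum bound handles it either. The fallback cannot work even in principle. A different averaging only produces another upper bound for $\overrightarrow{\diam}(G)$, whereas the statement asserts a bound on the specific quantity $\frac{3n}{S}-\frac{2(\mu^{\text{in}})^TA\mu^{\text{out}}}{S^2}$. Also, your target $(\mu^{\text{out}})^TD\mu^{\text{in}}/S^2\le 1/K$ is stronger than needed and not evidently true. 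The slack $n/S\le 1/K$ inside the term $2n/S$ has to be used.

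Do not try to recover this step from the paper, because its argument there is invalid. It claims
\[
2(\mu^{\text{in}})^TA\mu^{\text{out}}=\sum_{v,w}\mu^{\text{in}}(v)\bigl(d(v,w)-d(w,v)\bigr)\mu^{\text{out}}(w)\ge K^2\sum_{v,w}\bigl(d(v,w)-d(w,v)\bigr)=0,
\]
but $d(v,w)-d(w,v)$ takes both signs, so $\mu^{\text{in}}(v)\mu^{\text{out}}(w)\ge K^2$ cannot be applied termwise. Expanding with $D\mu^{\text{out}}=n\mathbf 1$ and $D^T\mu^{\text{in}}=n\mathbf 1$ gives $2(\mu^{\text{in}})^TA\mu^{\text{out}}=\delta^TD\delta$ with $\delta=\mu^{\text{out}}-\mu^{\text{in}}$. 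For the paper's own modified wheel with the arc $(v_2,v_4)$ deleted, where $K=\frac25$, one computes $\delta^TD\delta=-\frac{76}{25}<0$.

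The missing idea is the a priori bound $\overrightarrow{\diam}(G)\le 2/K$. By \eqref{eq_outcurv}, \eqref{eq_incurv} and $\mu^{\text{out}},\mu^{\text{in}}\ge K$, every row and column sum of $D$ is at most $n/K$. Hence $n\,d(x,y)\le\sum_u\bigl(d(x,u)+d(u,y)\bigr)\le 2n/K$. Now write $\mu^{\text{in}}=K\mathbf 1+\nu^{\text{in}}$ with $\mathbf 1^T\nu^{\text{in}}=S-nK$. Bound the $K\mathbf 1$ part by row sums and the $\nu^{\text{in}}$ part by the diameter:
\[
(\mu^{\text{out}})^TD\mu^{\text{in}}\le K\cdot S\cdot\frac{n}{K}+\frac{2}{K}\,S\,(S-nK)=\frac{2S^2}{K}-nS .
\]
Therefore
\[
\frac{3n}{S}-\frac{2(\mu^{\text{in}})^TA\mu^{\text{out}}}{S^2}=\frac{2n}{S}+\frac{(\mu^{\text{out}})^TD\mu^{\text{in}}}{S^2}\le\frac{n}{S}+\frac{2}{K}\le\frac{3}{K}.
\]
This completes your splitting approach. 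It also shows $\overrightarrow{\diam}(G)\le 2/K<3/K$ outright, so the hypothesis of Cheng's part never holds and that part is vacuous.
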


\begin{proof}
    Let $(x,y)$ be an arc with $d(x,y)= \overrightarrow{\diam (G)}$. By triangle inequality, for any $v,w\in V$, 
    \begin{align*}
         \overrightarrow{\diam (G)} =d(x,y) \leq d(x,v)+d(v,w)+d(w,y).
    \end{align*}
    Multiplying both sides by $\mu^{\text{out}}(v)\mu^{\text{in}}(w)$ and summing over all $v,w\in V$, we get
    \begin{align*}
         \overrightarrow{\diam (G)} \sum_{v,w\in V} \mu^{\text{out}}(v)\mu^{\text{in}}(w) &\leq \sum_{v,w\in V} \mu^{\text{out}}(v)\mu^{\text{in}}(w) d(x,v)  +  \sum_{v,w\in V} \mu^{\text{out}}(v)\mu^{\text{in}}(w)d(v,w) \\
         &+  \sum_{v,w\in V} \mu^{\text{out}}(v)\mu^{\text{in}}(w)d(w,y) \\
         &=n \|\mu^{\text{in}}\|_{\ell_1}+ (\mu^{\text{out}})^T D (\mu^{\text{in}}) + n \|\mu^{\text{out}}\|_{\ell_1} \\
         &=2n\|\mu^{\text{in}}\|_{\ell_1}+(\mu^{\text{out}})^T D (\mu^{\text{in}}).
    \end{align*}
    Let $A=\frac{1}{2}(D-D^T)$ be the skew-symmetric part of $D$. Then, 
    \begin{align*}
        2n\|\mu^{\text{in}}\|_{\ell_1}+(\mu^{\text{out}})^T D (\mu^{\text{in}}) &= 2n\|\mu^{\text{in}}\|_{\ell_1}+(\mu^{\text{out}})^T (2A+D^T) (\mu^{\text{in}}) \\
        &= 3n\|\mu^{\text{in}}\|_{\ell_1} -2(\mu^{\text{in}})^TA(\mu^{\text{out}})
    \end{align*}
    Hence, $\overrightarrow{\diam (G)} \leq \dfrac{3n}{\|\mu^{\text{in}}\|_{\ell_1}}-\dfrac{2(\mu^{\text{in}})^TA(\mu^{\text{out}})}{\|\mu^{\text{in}}\|_{\ell_1}^2}$. By assumption, $\min_v \mu^{\text{out}}(v),\min_v \mu^{\text{in}}(v) \geq K>0$, we have $\|\mu^{\text{in}}\|_{\ell_1} \geq nK$ and 
    \begin{align*}
         2(\mu^{\text{in}})^TA(\mu^{\text{out}})&=\sum_{v,w\in V} \mu^{\text{in}}(v) (d(v,w)-d(w,v)) \mu^{\text{out}}(w) \\
         &\geq K^2\sum_{v,w\in V} (d(v,w)-d(w,v)) 
         =0.
    \end{align*} 
    The last equality follows as the sum of all entries of skew-symmetric part of any matrix is zero. Therefore,
    \vspace{-0.4cm} 
    \[
        \overrightarrow{\diam (G)} \leq \dfrac{3n}{\|\mu^{\text{in}}\|_{\ell_1}}-\dfrac{2(\mu^{\text{in}})^TA(\mu^{\text{out}})}{\|\mu^{\text{in}}\|_{\ell_1}^2} \leq \frac{3n}{nK}-0=\frac{3}{K}. 
    \]
    Finally, suppose $\overrightarrow{\diam(G)}=\dfrac{3}{K}$. Tracing back the inequalities, we see that 
    \[
    \min_{v} \mu^{\text{out}}(v)=\min_{v} \mu^{\text{out}}(v)=K \ \text{ and } \ \|\mu^{\text{in}}\|_{\ell_1}=\|\mu^{\text{out}}\|_{\ell_1}=nK.
    \] 
    Together, this implies that $\mu^{\text{out}}=\mu^{\text{out}}=K\mathbf{1_n}$. Therefore, $G$ has constant in- and out-curvature $K\mathbf{1_n}$. 
    \end{proof}

    \vspace{-0.2cm}    
We shall now prove the Discrete Lichnerowicz Theorem for the first nonzero eigenvalue of the Laplacian of digraphs. 
\begin{theorem}[Discrete Lichnerowicz Theorem]
\label{theo:lichnerowicz}
Let $G=(V,E)$ be a digraph. Suppose $G$ has out-curvature bounded below by $K>0$, i.e. $\min_v \mu^{\text{out}}(v) \geq K>0$. Then, 
   \begin{equation}
        \lambda_1 \geq \frac{\min_{(u,v)\in E} \phi(u)P(u,v)}{2n} 
        \left( \|\mu^{\text{out}}\|_{\ell_1} \ + \min_v \mu^{\text{out}}(v) \right) \geq \frac{\min_{(u,v)\in E} \phi(u)P(u,v)}{2n} (n+1)K. 
    \end{equation}
\end{theorem}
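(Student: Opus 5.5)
The plan is to adapt the path-energy argument Steinerberger uses for the undirected Lichnerowicz bound to the Rayleigh quotient \eqref{eq_lambda_1_inf}. Write $m_E=\min_{(u,v)\in E}\phi(u)P(u,v)$ and, for $f:V\to\R$, let $\mathcal{E}(f)=\sum_{(u,v)\in E}(f(u)-f(v))^2$. Then the numerator in \eqref{eq_lambda_1_inf} is at least $m_E\,\mathcal{E}(f)$. It therefore suffices to show, for every admissible $f$ (that is, $\sum_x f(x)\phi(x)=0$), that
\[
\mathcal{E}(f)\;\geq\;\frac{\|\mu^{\text{out}}\|_{\ell_1}+\min_v\mu^{\text{out}}(v)}{n}\sum_{v}f(v)^2\phi(v).
\]
The second inequality in the theorem is then immediate: $\mu^{\text{out}}\geq K$ gives $\|\mu^{\text{out}}\|_{\ell_1}\geq nK$ and $\min_v\mu^{\text{out}}(v)\geq K$.

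\textbf{Key local estimate.} For any $x,y$, take a shortest directed path $x=z_0\to z_1\to\cdots\to z_d=y$ with $d=d(x,y)$. Telescoping along the path and applying Cauchy--Schwarz gives
\[
(f(x)-f(y))^2\le d(x,y)\sum_{i}(f(z_i)-f(z_{i+1}))^2\le d(x,y)\,\mathcal{E}(f),
\]
since the arcs of a shortest path are distinct. Multiplying by $\mu^{\text{out}}(y)\geq0$, summing over $y$, and using \eqref{eq_outcurv} yields, for every $x$,
\[
\sum_y\mu^{\text{out}}(y)(f(x)-f(y))^2\le n\,\mathcal{E}(f).
\]
This is exactly where the out-curvature, rather than the in-curvature, enters.

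\textbf{Lower bound for the left side.} I next bound $\sum_y\mu^{\text{out}}(y)(f(x)-f(y))^2$ from below by a multiple of $\sum_v f(v)^2\phi(v)$. I will use two observations.
\begin{enumerate}[(a)]
\item $\mathcal{E}$ is invariant under adding a constant $c$ to $f$.
\item Since $\sum_x f(x)\phi(x)=0$ and $\sum_x\phi(x)=1$, for every constant $c$ we have $\sum f^2\phi\le\sum(f-c)^2\phi\le\sum(f-c)^2$, using $\phi\le 1$.
\end{enumerate}
I would choose $c$ so that $g=f-c$ is convenient, either with $\sum g=0$ or with $c$ the $\mu^{\text{out}}$-weighted mean. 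Expanding
\[
\sum_y\mu^{\text{out}}(y)(g(x)-g(y))^2=\|\mu^{\text{out}}\|_{\ell_1}g(x)^2-2g(x)\sum_y\mu^{\text{out}}(y)g(y)+\sum_y\mu^{\text{out}}(y)g(y)^2,
\]
I then either sum over $x$ or evaluate at a well-chosen $x$ (for instance the maximiser of $|g|$). Splitting $\mu^{\text{out}}=\min_v\mu^{\text{out}}(v)\,\mathbf{1}+(\mu^{\text{out}}-\min_v\mu^{\text{out}}(v)\,\mathbf{1})$ should make the $\|\mu^{\text{out}}\|_{\ell_1}$ term and the $\min_v\mu^{\text{out}}(v)$ term appear separately, with both pieces nonnegative.

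\textbf{Main obstacle.} I expect the hardest step to be matching the normalisation $1/n$ in front of $\|\mu^{\text{out}}\|_{\ell_1}+\min_v\mu^{\text{out}}(v)$. The straightforward route — summing the local estimate over all $x$ with $\sum g=0$ — gives
\[
\big(\|\mu^{\text{out}}\|_{\ell_1}+n\min_v\mu^{\text{out}}(v)\big)\sum g^2\le n^2\,\mathcal{E}(f),
\]
and this falls short of the claimed bound on the $\|\mu^{\text{out}}\|_{\ell_1}$ term by a factor of $n$. My first attempt to recover that factor is to avoid summing over $x$. Instead, I fix $x$ at an extremum of $g$, with the shift $c$ chosen so that $g(x)$ and $\sum_y\mu^{\text{out}}(y)g(y)$ have opposite signs. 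The cross term is then nonnegative, and $g(x)^2\ge\frac1n\sum g^2$ holds by choice of $x$. If that still fails to produce the stated constant, I would fall back to tracking $\phi$ more carefully, or else accept the weaker constant that the argument actually proves.
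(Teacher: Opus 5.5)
Your path estimate $\sum_y\mu^{\text{out}}(y)(f(x)-f(y))^2\le n\,\mathcal{E}(f)$ for each $x$ is correct. It is the same starting point as the paper; pulling out $\min_{(u,v)\in E}\phi(u)P(u,v)$ first is equivalent to the paper's use of $\beta$.

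The gap is the last step, which you leave open, and the concrete attempt you describe does not reach the stated constant. Using $g(x)^2\ge\frac{1}{n}\sum_v g(v)^2$ at an extremum gives only
\[
n\,\mathcal{E}(f)\ge\left(\frac{1}{n}\|\mu^{\text{out}}\|_{\ell_1}+\min_v\mu^{\text{out}}(v)\right)\sum_v f(v)^2\phi(v),
\]
which is the same factor-$n$ loss as your uniform summation. The problem is that you compare with the uniform average of $g^2$. The denominator of the Rayleigh quotient is a $\phi$-average, and $\phi$ is itself a probability measure on $V$.

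The missing idea, which is exactly the paper's argument, is to average your local estimate over $x$ with weights $\phi(x)$ rather than uniformly. Since $\sum_x\phi(x)=1$, the right side stays $n\,\mathcal{E}(f)$. The cross term $-2(\sum_x\phi(x)f(x))(\sum_y\mu^{\text{out}}(y)f(y))$ vanishes by the constraint $\sum_x f(x)\phi(x)=0$, so no shift is needed, and you get
\[
\|\mu^{\text{out}}\|_{\ell_1}\sum_x f(x)^2\phi(x)+\sum_y\mu^{\text{out}}(y)f(y)^2\le n\,\mathcal{E}(f).
\]
Then $\sum_y\mu^{\text{out}}(y)f(y)^2\ge\min_v\mu^{\text{out}}(v)\sum_y f(y)^2\ge\min_v\mu^{\text{out}}(v)\sum_y f(y)^2\phi(y)$ finishes the proof.

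Your extremum route can also be repaired. Take $c$ to be the $\mu^{\text{out}}$-weighted mean, so the cross term is zero for every $x$, and choose $x$ maximizing $|g|$. Then use $g(x)^2\ge\sum_v\phi(v)g(v)^2\ge\sum_v\phi(v)f(v)^2$ (your observation (b)) in place of $g(x)^2\ge\frac{1}{n}\sum_v g(v)^2$.
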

\begin{proof}
    Suppose that $f:V \rightarrow \R$ is a function that achieves the infimum in Equation \eqref{eq_lambda_1_inf}. Consider any arc $(x,y)\in E$. Let $\gamma_{xy}$ be a shortest path from $x$ to $y$. Then
    \begin{align*}
        (f(x)-f(y))^2 &= \left( \sum_{(u,v)\in \gamma_{xy}} (f(u)-f(v))   \right)^2 \\
        &\leq d(x,y)\sum_{(u,v)\in \gamma_{xy}} (f(u)-f(v))^2.
    \end{align*}
    Multiply both sides by $\min_{(u,v)\in \gamma_{xy}} \phi(u)P(u,v)>0$, we have
    \begin{align*}
       \min_{(u,v)\in \gamma_{xy}} \phi(u)P(u,v) (f(x)-f(y))^2 &\leq d(x,y) \min_{(u,v)\in \gamma_{xy}} \phi(u)P(u,v)\sum_{(u,v)\in \gamma_{xy}} (f(u)-f(v))^2 \\
       &\leq   d(x,y)\sum_{(u,v)\in \gamma_{xy}} (f(u)-f(v))^2 \phi(u)P(u,v) \\
       &\leq  d(x,y) \sum_{(u,v)\in E} (f(u)-f(v))^2 \phi(u)P(u,v).
    \end{align*}
    Let    
    \vspace{-0.4cm} 
    \begin{align*}
    \alpha(f) &=\dfrac{\sum_{(u,v)\in E} (f(u)-f(v))^2 \phi(u)P(u,v)}{2},   \\
    \beta &=\min_{(u,v)\in \gamma_{xy}} \phi(u)P(u,v) \geq \min_{(u,v)\in E} \phi(u)P(u,v).
    \end{align*}
    Multiplying both sides of the inequalities by $\phi(x)\mu^{\text{out}}(y)$ and sum over all $x,y\in V$, we have
    \[
    \beta \sum_{x,y\in V} (f(x)-f(y))^2 \phi(x)\mu^{\text{out}}(y)  \leq 2\alpha(f) \sum_{x,y\in V} d(x,y)\phi(x)\mu^{\text{out}}(y). 
    \]
    Expanding the left side and simplifying the right side,
    \[
    \beta \sum_{x,y\in V} \phi(x)\mu^{\text{out}}(y) [f^2(x)-2f(x)f(y)+f^2(y)] \leq 2n\alpha(f) .
    \]
    It follows that
    \[
    \|\mu^{\text{out}}\|_{\ell_1} \<\phi, f^2 \> + \sum_{y\in V} f^2(y)\mu^{\text{out}}(y)   \leq \frac{2n\alpha(f)}{\beta}.
    \]
    We focus on the term $\sum_{y\in V} f^2(y)\mu^{\text{out}}(y)$. Note that 
    \begin{align*}
        \sum_{y\in V} f^2(y)\mu^{\text{out}}(y) &\geq \min_v \mu^{\text{out}}(v)   \sum_{y\in V} f^2(y)\\
        &=\min_v \mu^{\text{out}}(v)  \sum_{x\in V} \phi(x) \sum_{y\in V} f^2(y) \\
        &\geq\min_v \mu^{\text{out}}(v)  \sum_{y\in V} \phi(y)f^2(y) \\
        &=  \min_v \mu^{\text{out}}(v)  \<\phi, f^2 \>.
    \end{align*}
    Hence, we have
        \[
         \|\mu^{\text{out}}\|_{\ell_1} \<\phi, f^2 \> + \min_v \mu^{\text{out}}(v)  \<\phi, f^2 \>   \leq \frac{2n\alpha(f)}{\beta} ,
         \]
         which yields
         \[
         \frac{\beta}{2n} \left( \|\mu^{\text{out}}\|_{\ell_1} \ + \min_v \mu^{\text{out}}(v)  \right) \leq \frac{\alpha(f)}{\<\phi, f^2 \>}
         \]
    Therefore, we arrive at 
    \begin{equation}
        \lambda_1 \geq \frac{\min_{(u,v)\in E} \phi(u)P(u,v)}{2n} 
        \left( \|\mu^{\text{out}}\|_{\ell_1} \ + \min_v \mu^{\text{out}}(v)  \right).
    \end{equation}
    Since $\min_v \mu^{\text{out}}(v), \min_v \mu^{\text{in}}(v) \geq K>0$, we obtain
    \begin{equation*}
        \lambda_1 \geq \frac{\min_{(u,v)\in E} \phi(u)P(u,v)}{2n} 
        \left( \|\mu^{\text{out}}\|_{\ell_1} \ + \min_v \mu^{\text{out}}(v)  \right) \geq \frac{\min_{(u,v)\in E} \phi(u)P(u,v)}{2n} (n+1)K .
    \end{equation*}
\end{proof}
\begin{remark}
    \begin{enumerate}
        \item If $D$ is symmetric, then $\cL=I_n-D^{-\frac{1}{2}}AD^{-\frac{1}{2}}$ is the \textit{normalized} Laplacian matrix of $G$. Meanwhile $\lambda_1$ in \cite[Theorem 3]{steinerberger2023curvature} is the first nonzero eigenvalue of the \textit{unnormalized} Laplacian $L=D-A$. 
        \item When $G$ is undirected, then $\phi(u)=\dfrac{d_u}{\text{vol}(G)}$ and $P(u,v)=\dfrac{1}{d_u}$, so  
        
        \[
        \min_{(u,v)\in E} \phi(u)P(u,v) = \dfrac{1}{\text{vol}(G)},
        \] where $\text{vol}(G)=\sum_{v\in V} d_v$. Therefore, 
        \[
        \lambda_1 \geq \dfrac{(n+1)K}{2n\text{vol}(G)}.
        \] 
        Hence, if $G$ is $r$-regular, then $\text{vol}(G)=nr$, so 
        $ \lambda_1 \geq \dfrac{(n+1)K}{2n^2r}$. Let $\lambda'_1$ be the first nonzero eigenvalue of the unnormalized Laplacian $L$. Then, $\lambda_1' =r\lambda_1$. Our lower bound becomes 
        \[
        \lambda_1' \geq \dfrac{(n+1)K}{2n^2}>\dfrac{K}{2n}.
        \]
        This shows that the lower bound $ \dfrac{(n+1)K}{2n^2}$ is better than the lower bound $\dfrac{K}{2n}$ in \cite[Theorem 3]{steinerberger2023curvature} for the case of undirected regular graphs.
    \end{enumerate}    
\end{remark}


\section*{Acknowledgment}
Johnny Lim acknowledges the support from the Ministry of Higher Education Malaysia for Fundamental Research Grant Scheme with Project Code: 
\linebreak 
FRGS/1/2025/STG06/USM/02/1. \\


\noindent \textbf{Conflicts of interest.} \quad The authors declare no conflicts of interest. \\
\textbf{Data availability.} \quad Not applicable.

\bibliography{bibliography}{}
\bibliographystyle{amsplain}



\end{document}